\documentclass[a4paper]{article}

\usepackage[a4paper,left=2.5cm,right=2.5cm,top=2.5cm,bottom=2.5cm]{geometry}

\usepackage{amsmath,amssymb,amsfonts,mathtools}
\usepackage{amsthm}

\usepackage{graphicx}
\usepackage{indentfirst}
\usepackage{array}
\usepackage{mathrsfs}
\usepackage{xcolor}
\usepackage{cite}
\usepackage[colorlinks=true,linkcolor=blue,citecolor=blue,urlcolor=blue]{hyperref}

\numberwithin{equation}{section}

\theoremstyle{plain}
\newtheorem{theorem}{Theorem}[section]
\newtheorem{lemma}[theorem]{Lemma}
\newtheorem{proposition}[theorem]{Proposition}

\theoremstyle{definition}

\theoremstyle{remark}
\newtheorem{remark}[theorem]{Remark}

\newcommand{\N}{\mathbb{N}}

\newcommand{\R}{\mathbb{R}}

\newcommand{\supp}{\operatorname{supp}}
\newcommand{\Vol}{\operatorname{Vol}}
\newcommand{\diam}{\operatorname{diam}}
\newcommand{\inj}{\operatorname{inj}}

\newcommand{\Isom}{\operatorname{Isom}}
\newcommand{\Card}{\operatorname{Card}}

\title{Improved Fractional Sobolev Embeddings on Closed Riemannian Manifolds under Isometric Group Actions}

\author{Hao Tan, Zhipeng Yang\thanks{Corresponding author. E-mail: yangzhipeng326@163.com}}

\date{}

\AtEndDocument{%
  \par
  \bigskip
  \bigskip

  \noindent
  \textbf{Hao Tan}\\[0.2em]
  \textsc{Department of Mathematics, Yunnan Normal University, Kunming, China}\\[0.3em]
  \textit{E-mail address}: \texttt{3332502677@qq.com}\\[1.5em]

  \noindent
  \textbf{Zhipeng Yang}\\[0.2em]
  \textsc{Department of Mathematics, Yunnan Normal University, Kunming, China}\\
  \textsc{Yunnan Key Laboratory of Modern Analytical Mathematics and Applications, Kunming, China}\\[0.3em]
  \textit{E-mail address}: \texttt{yangzhipeng326@163.com}%
}

\begin{document}

\maketitle

\textbf{Abstract.}
In this paper, we study symmetry-improved fractional Sobolev embeddings on closed Riemannian manifolds under the action of compact isometry groups. We prove that \(G\)-invariant fractional Sobolev spaces embed into higher \(L^p\) spaces, with corresponding compactness results depending on the minimal orbit dimension. We also investigate the associated optimal constants in the improved critical inequality and in the standard critical inequality under finite-orbit symmetry.

\vspace{3mm}
\noindent\textbf{Keywords:} fractional Sobolev spaces; closed Riemannian manifolds; isometry groups; optimal constants.

\vspace{3mm}
\noindent\textbf{MSC2020 Mathematics Subject Classification:} 46E35, 58J05, 53C21.

% \tableofcontents

\section{Introduction and main results}

Let \((M,g)\) be a closed Riemannian \(n\)-manifold.
For the classical first-order Sobolev space \(H_1^q(M)\), the standard Sobolev embedding theorem asserts that, for \(1\le q<n\),
\[
H_1^q(M)\hookrightarrow L^p(M)
\]
continuously for every \(1\le p\le q^*=\frac{nq}{n-q}\), and compactly for every \(1\le p<q^*\).
In the presence of symmetries, Hebey and Vaugon \cite{MR1481970,MR1489942} showed that the critical dimension can be improved by replacing \(n\) with the dimension of the quotient space.
More precisely, if \(G\subset \Isom_g(M)\) is compact, if every orbit \(O_G^x\) is infinite, and if
\[
k=\min_{x\in M}\dim O_G^x,
\]
then \(k\ge 1\), and the corresponding \(G\)-invariant Sobolev embeddings improve from dimension \(n\) to dimension \(n-k\).
Related results were also obtained in special situations by Lions \cite{MR850686} and Ding \cite{MR863646}.

The purpose of this paper is to establish analogous results for intrinsic fractional Sobolev spaces on closed manifolds and to investigate the corresponding optimal constants under symmetry constraints.

In the Euclidean setting, for \(s\in(0,1)\), \(p\in[1,\infty)\), and \(sp<n\), the fractional Sobolev space \(W^{s,p}(\R^n)\) is defined by
\begin{equation}\label{eq1.1}
W^{s,p}(\R^n)
=
\bigl\{
u\in L^p(\R^n): [u]_{s,p}<\infty
\bigr\},
\end{equation}
where
\begin{equation}\label{eq1.2}
[u]_{s,p}
=
\left(
\iint_{\R^n\times\R^n}
\frac{|u(x)-u(y)|^p}{|x-y|^{n+sp}}\,dx\,dy
\right)^{1/p}.
\end{equation}
On a closed Riemannian manifold \((M,g)\), Guo, Peng, and Xi \cite{MR3856432} introduced the geodesic-distance version of this space, and it was later shown in \cite{MR3975112} that a Brezis--Bourgain--Mironescu type formula still holds.

In this paper we use the intrinsic heat-kernel definition developed in \cite{Tan2025SharpFS}.
Let \(K_M(t,x,y)\) be the heat kernel of \((M,g)\).
For \(s\in(0,1)\) and \(p\in[1,\infty)\), define
\[
K_p^s(x,y)
=
\frac{1}{\left|\Gamma\left(-\frac{sp}{2}\right)\right|}
\int_0^\infty
K_M(t,x,y)\,
\frac{dt}{t^{1+\frac{sp}{2}}},
\qquad x\ne y,
\]
and, for \(u\in L^p(M)\),
\begin{equation}\label{eq1.3}
[u]_{W^{s,p}(M)}^p
=
\iint_{M\times M}
|u(x)-u(y)|^p K_p^s(x,y)\,d\mu(x)\,d\mu(y).
\end{equation}
The intrinsic fractional Sobolev space is then defined by
\[
W^{s,p}(M)
=
\bigl\{
u\in L^p(M): [u]_{W^{s,p}(M)}<\infty
\bigr\},
\]
and is endowed with the norm
\[
\|u\|_{W^{s,p}(M)}
=
\|u\|_{L^p(M)}+[u]_{W^{s,p}(M)}.
\]
Moreover, \([u]_{W^{s,p}(M)}\) is equivalent to the geodesic-distance Gagliardo seminorm
\[
\left(
\iint_{M\times M}
\frac{|u(x)-u(y)|^p}{d_g(x,y)^{n+sp}}\,d\mu(x)\,d\mu(y)
\right)^{1/p}.
\]

By \cite{Tan2025SharpFS}, if \(sq<n\) and
\[
q_s^*=\frac{nq}{n-sq},
\]
then
\[
W^{s,q}(M)\hookrightarrow L^p(M)
\]
continuously for every \(1\le p\le q_s^*\), and compactly for every \(1\le p<q_s^*\).

Let \(G\subset \Isom_g(M)\) be a compact subgroup.
We define the \(G\)-invariant subspace by
\[
W_G^{s,q}(M)
=
\bigl\{
u\in W^{s,q}(M): u\circ \sigma=u \text{ for all } \sigma\in G
\bigr\}.
\]
If every orbit is infinite, we set
\[
k=\min_{x\in M}\dim O_G^x.
\]
In this case \(k\ge 1\), and the natural improved critical exponent is
\[
\widetilde q_s^*
=
\frac{(n-k)q}{n-k-sq},
\qquad \text{provided } n-k>sq.
\]
We say that the inequality
\begin{equation}\label{eq1.4}
\|u\|_{L^{\widetilde q_s^*}(M)}
\le
A [u]_{W^{s,q}(M)}
+
B \|u\|_{L^q(M)}
\qquad \text{for all } u\in W_G^{s,q}(M)
\end{equation}
is \(G\)-valid.

When \(n-k>sq\), we define
\[
\mathcal A_{q,G}(M)
=
\bigl\{
A\in \R:\ \exists B\in \R \text{ such that \eqref{eq1.4} is \(G\)-valid}
\bigr\},
\]
\[
\mathcal B_{q,G}(M)
=
\bigl\{
B\in \R:\ \exists A\in \R \text{ such that \eqref{eq1.4} is \(G\)-valid}
\bigr\},
\]
and
\[
\alpha_{q,G}(M)=\inf \mathcal A_{q,G}(M),
\qquad
\beta_{q,G}(M)=\inf \mathcal B_{q,G}(M).
\]
Thus, \(\alpha_{q,G}(M)\) and \(\beta_{q,G}(M)\) are the optimal first and second constants in the improved fractional Sobolev inequality under the \(G\)-invariance constraint.

Our first result establishes the improved \(L^p\)-embedding for \(G\)-invariant functions.

\begin{theorem}\label{thm1.1}
Let \((M,g)\) be a closed Riemannian \(n\)-manifold, let \(s\in(0,1)\), let \(q\in[1,\infty)\), and let \(G\subset \Isom_g(M)\) be compact.
Assume that
\[
\Card O_G^x=+\infty
\qquad \text{for every } x\in M,
\]
and set
\[
k=\min_{x\in M}\dim O_G^x.
\]
Then \(k\ge 1\), and the following assertions hold.
\begin{enumerate}
\item[(i)] If \(n-k<sq\), then
\[
W_G^{s,q}(M)\hookrightarrow L^p(M)
\]
continuously and compactly for every \(p\ge 1\).

\item[(ii)] If \(n-k>sq\), then
\[
W_G^{s,q}(M)\hookrightarrow L^p(M)
\]
continuously for every
\[
1\le p\le \frac{(n-k)q}{n-k-sq},
\]
and compactly for every
\[
1\le p<\frac{(n-k)q}{n-k-sq}.
\]
\end{enumerate}
\end{theorem}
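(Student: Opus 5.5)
We follow the Hebey--Vaugon strategy: localize near orbits, pass to local quotient coordinates, and reduce to the non-symmetric fractional embedding of \cite{Tan2025SharpFS} in dimension \(n-k\). The claim \(k\ge 1\) is immediate, since \(G\) is a compact Lie group and each orbit \(O_G^x\cong G/G_x\) is a compact submanifold; an infinite compact submanifold has positive dimension.

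\textbf{Local model near an orbit.} Fix \(x\in M\), let \(k_x=\dim O_G^x\ge k\), and use the slice theorem. There is a tubular neighbourhood \(\Omega_x\) of \(O_G^x\) together with local charts \(\varphi:U\to V_1\times V_2\subset\R^{k_x}\times\R^{n-k_x}\), with \(U\) a small piece of \(\Omega_x\) around a point of the orbit. These can be chosen so that \(G\)-invariant functions on \(U\) correspond to functions \(u(y_1,y_2)=\tilde u(y_2)\) independent of \(y_1\), and so that the metric is uniformly comparable to the Euclidean one. Because orbits have nearby dimension at least \(k_x\) (lower semicontinuity of orbit dimension), one may alternatively fix a subfamily of orbit directions of dimension exactly \(k\); this gives the splitting \(\R^k\times\R^{n-k}\) uniformly. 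By compactness, finitely many such \(U_i\) cover \(M\). We take a \(G\)-invariant partition of unity \(\{\eta_i\}\), averaging a standard one over \(G\) with Haar measure, and a \(G\)-invariant cutoff adapted to the tubes. Then \(\eta_i u\) is again \(G\)-invariant.

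\textbf{Key estimate: seminorm reduction.} For \(G\)-invariant \(v=\eta_iu\) supported in the tube, we need
\[
[\tilde v]_{W^{s,q}(\R^{n-k})}^q\le C\,\|v\|_{W^{s,q}(M)}^q .
\]
We use the equivalence of \([\cdot]_{W^{s,q}(M)}\) with the geodesic Gagliardo seminorm and work in the chart, where \(d_g\simeq|\cdot|\). Writing \(z=(z_1,z_2)\) and \(w=(w_1,w_2)\), the \(y_1\)-independence gives
\[
\iint\frac{|\tilde v(z_2)-\tilde v(w_2)|^q}{(|z_1-w_1|^2+|z_2-w_2|^2)^{(n+sq)/2}}\,dz\,dw .
\]
Integrating out \(w_1\) over a ball of radius \(\rho\) produces a factor \(c\,|z_2-w_2|^{-(n-k)-sq}\) as long as \(|z_2-w_2|\le\rho\). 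This is the standard identity \(\int_{\R^k}(|a|^2+t^2)^{-(n+sq)/2}\,da=c\,t^{-(n-k+sq)}\). The region \(|z_2-w_2|>\rho\) is controlled by \(\|v\|_{L^q}^q\). We also need the \(L^q\) and \(L^p\) norms to transfer up to constants, since \(\int_U|v|^p=\int_{V_2}|\tilde v|^p\,|V_1|\) after Fubini with a comparable density. Applying the fractional Sobolev embedding on \(\R^{n-k}\), or equivalently \cite{Tan2025SharpFS} on a compact \((n-k)\)-dimensional model after extension, yields \(\|\tilde v\|_{L^p}\le C\|\tilde v\|_{W^{s,q}}\) for \(p\le\widetilde q_s^*\) when \(n-k>sq\), and for all \(p\) when \(n-k<sq\). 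We then sum over \(i\). The commutator term \([\eta_iu]\le C([u]+\|u\|_{L^q})\) holds because \(\eta_i\) is Lipschitz and \(s<1\); this is a standard estimate.

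\textbf{Compactness and the main obstacle.} For compactness, a bounded sequence in \(W_G^{s,q}(M)\) has a subsequence converging in \(L^q(M)\) by the compact embedding of \cite{Tan2025SharpFS}. Interpolating between \(L^q\) and \(L^{\widetilde q_s^*}\) (respectively \(L^{p'}\) with \(p'\) large in case (i)) gives convergence in every subcritical \(L^p\). The main obstacle is the geometric step: producing uniform local coordinates in which \(G\)-invariance becomes independence of \(k\) variables, even near exceptional orbits where the orbit dimension jumps. Along the way the Gagliardo kernel must stay comparable, and the dimension-reduction integral must be carried out with bounded distortion. We would handle this as Hebey--Vaugon do. Near a point, choose \(k\) Killing fields from the Lie algebra of \(G\) that are linearly independent at that point, hence on a neighbourhood. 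Their flows give a local diffeomorphism \((t,y)\mapsto\exp(t\cdot X)y\) from \(B^k\times\) a transversal \((n-k)\)-disk, and \(G\)-invariant functions are constant in \(t\). Since these flows are isometries, the Jacobian and metric distortion are bounded, which suffices for the comparison above.
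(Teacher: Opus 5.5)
Your proposal is correct in substance. For continuity it follows the paper's scheme: charts in which invariant functions depend only on transversal variables, integrating out the orbit variables in the Gagliardo kernel (as in Lemmas~\ref{lem3.3}--\ref{lem3.4}), then the Euclidean embedding in the reduced dimension. It differs from the paper in two places.

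\textbf{Charts.} You pick $k$ fundamental Killing fields that are independent at each point, so every chart splits as $\R^k\times\R^{n-k}$ and the reduced dimension is uniformly $n-k$. The paper uses charts with $k_m=\dim O_G^{x_m}\ge k$. It therefore has to treat $d_m<sq$, $d_m=sq$ and $d_m>sq$ separately, which is why it needs the borderline Lemma~\ref{lem3.6} and the monotonicity of $t\mapsto tq/(t-sq)$. Your choice makes that case analysis unnecessary.

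\textbf{Compactness.} You combine the compact embedding $W^{s,q}(M)\hookrightarrow L^q(M)$, valid for all functions, with H\"older interpolation against the improved continuous embedding. The paper instead proves compactness chart by chart via fractional Rellich in dimension $d_m$, and in case (i) via Lemma~\ref{lem3.10} and Arzel\`a--Ascoli. Your route is shorter; the paper's additionally gives $W_G^{s,q}(M)\hookrightarrow C^{0,\alpha}(M)$.

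\textbf{Small repairs.} First, in case (i) one can have $sq\ge n$ (for instance $T^2$ with an $S^1$ factor acting, $k=1$, $s=0.9$, $q=3$). There the compact embedding of \cite{Tan2025SharpFS} is not available as stated. Pass first to $W^{t,q}(M)$ with $t<s$ and $tq<n$; the kernel comparison used in Lemma~\ref{lem3.6} works on $M$ thanks to the two-sided bounds $K_q^t,K_q^s\asymp d_g^{-(n+tq)},d_g^{-(n+sq)}$.

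Second, your $G$-invariant partition of unity is superfluous, and as described it is muddled. Averaging a partition subordinate to small charts makes the supports $G$-saturated, so they are no longer contained in a single chart. Instead, restrict $u$ directly to each chart and bound the chart double integral by the global seminorm, as the paper does.

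Third, carry out the estimates for $u\in C_G^\infty(M)$ and conclude by the density result Lemma~\ref{lem2.2}. This avoids the issue that invariance of a general $u\in W_G^{s,q}(M)$ only holds almost everywhere.
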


Our second result identifies the optimal second constant in the improved critical inequality.

\begin{theorem}\label{thm1.2}
Under the assumptions of Theorem~\ref{thm1.1}, assume in addition that
\[
n-k>sq.
\]
Then
\[
\beta_{q,G}(M)=\Vol(M)^{-s/(n-k)},
\]
and there exists \(A>0\) such that
\[
\|u\|_{L^{\widetilde q_s^*}(M)}
\le
A [u]_{W^{s,q}(M)}
+
\Vol(M)^{-s/(n-k)}\|u\|_{L^q(M)}
\]
for every \(u\in W_G^{s,q}(M)\).
\end{theorem}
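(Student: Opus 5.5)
Write $p=\widetilde q_s^*$ throughout; note that $\frac1q-\frac1p=\frac{s}{n-k}$, so $\Vol(M)^{-s/(n-k)}=\Vol(M)^{1/p-1/q}$. The plan has two parts: a lower bound on $\beta_{q,G}(M)$ obtained by testing constants, and the validity of the inequality with the second constant exactly $\Vol(M)^{-s/(n-k)}$.

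\textbf{Lower bound.} Constant functions are $G$-invariant and have zero seminorm. Testing \eqref{eq1.4} on $u\equiv1$ gives
\[
\Vol(M)^{1/p}\le B\,\Vol(M)^{1/q},
\]
so every admissible $B$ satisfies $B\ge \Vol(M)^{-s/(n-k)}$. Hence $\beta_{q,G}(M)\ge \Vol(M)^{-s/(n-k)}$. The substance is the reverse inequality, and we will in fact show the infimum is attained.

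\textbf{Reduction for $q>1$.} Assume first $q>1$, so $p>q$. It suffices to prove, for some $A$,
\[
\|u\|_{L^p}^q\le A\,[u]_{W^{s,q}}^q+\Vol(M)^{-qs/(n-k)}\|u\|_{L^q}^q ,
\]
since $(a+b)^{1/q}\le a^{1/q}+b^{1/q}$ then yields the stated inequality. We argue by contradiction, following Hebey's scheme for the classical case. Suppose that for every $\alpha$ there is $u_\alpha\in W_G^{s,q}(M)$ with $\|u_\alpha\|_{L^p}=1$ and
\[
1>\alpha[u_\alpha]^q+\Vol(M)^{-qs/(n-k)}\|u_\alpha\|_q^q .
\]
Then $[u_\alpha]\to0$ and $\|u_\alpha\|_q$ is bounded. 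By the compactness in Theorem~\ref{thm1.1}(ii) (compact for $q<p$), a subsequence converges in $L^q$ to some $u$. Weak lower semicontinuity of the seminorm gives $[u]=0$, so $u$ is a constant $c$.

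\textbf{The main obstacle: concentration.} The hard step is excluding concentration in $L^p$. The difficulty is that convergence in $L^p$ is not given, since $p$ is critical. To handle it, split $u_\alpha=c+v_\alpha$ with $v_\alpha\to0$ in $L^q$ and $[v_\alpha]\to0$. We apply the $G$-valid inequality with some constants $(A_0,B_0)$ from Theorem~\ref{thm1.1}(ii):
\[
\|v_\alpha\|_p\le A_0[v_\alpha]+B_0\|v_\alpha\|_q\to0 .
\]
This uses only continuity of the embedding, and so it gives $u_\alpha\to c$ in $L^p$. Hence $|c|\Vol^{1/p}=1$. Passing to the limit in the contradiction inequality gives
\[
1\ge \Vol(M)^{-qs/(n-k)}|c|^q\Vol(M)=1 .
\]
This is not yet a contradiction. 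It is refined with a second-order expansion: write $u_\alpha=c_\alpha+v_\alpha$ with $c_\alpha$ the mean, and expand
\[
\|c_\alpha+v_\alpha\|_p^q-\Vol^{q/p-1}\|c_\alpha+v_\alpha\|_q^q .
\]
Because $p>q$, Taylor expansion of $t\mapsto|c+t|^r$ at $t=0$ makes this difference $\le C\|v_\alpha\|_p^2$ when $q\ge2$. When $1<q<2$, a corresponding Clarkson-type or Lindqvist-type inequality bounds it by $C(\|v_\alpha\|_p^2+\|v_\alpha\|_p^q)$, with the linear terms canceling since $\int v_\alpha=0$. Combining this with the Poincaré-type bound $\|v_\alpha\|_p\le C[u_\alpha]$ (the $G$-valid inequality plus the fractional Poincaré inequality for mean-zero functions) gives
\[
\alpha[u_\alpha]^q\le C\big([u_\alpha]^2+[u_\alpha]^q\big).
\]
Since $[u_\alpha]\to0$ and $q\le2$ in that regime, this is impossible for large $\alpha$. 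For $q>2$ the exponents are unfavorable ($[u]^2\ll[u]^q$ fails). There we would instead prove the $q$-th power inequality for $\|u\|_p^2$ (a Hebey-style $L^2$ trick), i.e. use $\|u\|_p^2\le \Vol^{2/p-2/q}\|u\|_q^2+C\|v\|_p^2$ and then $(a+b)^{1/2}\le\dots$. Handling this exponent bookkeeping is the step I expect to be most delicate.

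\textbf{The case $q=1$.} This is direct, with no contradiction argument. Write $u=\bar u+v$, where $\bar u$ is the mean. Then
\[
\|u\|_p\le |\bar u|\Vol^{1/p}+\|v\|_p\le \Vol^{1/p-1}\|u\|_1+C[u],
\]
by the $G$-valid inequality and the $W^{s,1}$ Poincaré inequality applied to $v$. This gives the inequality with second constant exactly $\Vol(M)^{-s/(n-k)}$.

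Combining the cases, the inequality in Theorem~\ref{thm1.2} holds with some $A>0$ and second constant $\Vol(M)^{-s/(n-k)}$. Together with the lower bound, this gives $\beta_{q,G}(M)=\Vol(M)^{-s/(n-k)}$.
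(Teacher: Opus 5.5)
Your lower bound (testing $u\equiv1$) and your $q=1$ argument are correct. The gap is the case $q>2$. There your strategy is to prove the $q$-th power inequality
\[
\|u\|_{L^p}^q\le A\,[u]_{W^{s,q}(M)}^q+\Vol(M)^{q/p-1}\|u\|_{L^q}^q .
\]
For $q>2$ this inequality is false for every $A$, so no exponent bookkeeping can rescue the contradiction argument. To see this, take a nonconstant $\phi\in C_G^\infty(M)$ with $\int_M\phi\,d\mu=0$; such a $\phi$ exists because $k<n$, so $M$ is not a single orbit. Set $u=1+\epsilon\phi$. Expanding $t\mapsto|1+t|^r$ for $r=p,q$ and using $\int_M\phi\,d\mu=0$ gives
\[
\|u\|_{L^p}^q-\Vol(M)^{q/p-1}\|u\|_{L^q}^q=\frac{q(p-q)}{2}\,\Vol(M)^{q/p-1}\,\epsilon^2\int_M\phi^2\,d\mu+O(\epsilon^3).
\]
On the other hand, $A[u]^q=A\epsilon^q[\phi]^q=o(\epsilon^2)$ when $q>2$, so the inequality fails for small $\epsilon$. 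Your replacement in this range, the squared inequality $\|u\|_p^2\le\Vol(M)^{2/p-2/q}\|u\|_q^2+C\|v\|_p^2$, is only asserted and not proved. As written, the proposal does not establish the theorem for $q>2$.

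The missing observation is that your $q=1$ argument works verbatim for every $q\ge1$, and this is exactly the paper's proof. Write $u=u_M+v$. H\"older gives $|u_M|\le\Vol(M)^{-1/q}\|u\|_{L^q}$, hence
\[
\|u\|_{L^p}\le\Vol(M)^{1/p}|u_M|+\|v\|_{L^p}\le\Vol(M)^{-s/(n-k)}\|u\|_{L^q}+\|v\|_{L^p}.
\]
Moreover, $\|v\|_{L^p}\le C[u]_{W^{s,q}(M)}$ by the continuous embedding of Theorem~\ref{thm1.1}(ii) combined with the Poincar\'e inequality $\|u-u_M\|_{L^q}\le C[u]_{W^{s,q}(M)}$; these are the paper's Lemmas~\ref{lem4.1} and~\ref{lem4.2}. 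Since the theorem is stated in the unpowered form, no concentration analysis is needed. Your contradiction and Taylor-expansion argument for $1<q\le2$ is sound. However, it proves the stronger $q$-th power form, which the statement does not require and which, by the computation above, holds only for $q\le2$.
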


\begin{remark}\label{rem1.3}
If all \(G\)-orbits are finite, then the improved exponent \(\widetilde q_s^*\) is no longer relevant, and one falls back to the standard critical exponent
\[
q_s^*=\frac{nq}{n-sq}.
\]
In this case, following \cite{Tan2025SharpFS}, one obtains
\[
\|u\|_{L^{q_s^*}(M)}
\le
A [u]_{W^{s,q}(M)}
+
\Vol(M)^{-s/n}\|u\|_{L^q(M)}
\]
for all \(u\in W_G^{s,q}(M)\), so the optimal second constant is \(\Vol(M)^{-s/n}\).
\end{remark}

Our third result concerns the optimal first constant in the standard critical inequality under orbit-cardinality constraints.

\begin{theorem}\label{thm1.4}
Let \((M,g)\) be a closed Riemannian \(n\)-manifold, let \(s\in(0,1)\), let \(q\in[1,\infty)\) satisfy \(sq<n\), and let \(G\subset \Isom_g(M)\) be compact.
Define
\[
k=\min_{x\in M}\Card O_G^x\in \N\cup\{+\infty\}.
\]
Then for every \(\varepsilon>0\) there exists \(B_\varepsilon\in \R\) such that
\[
\left(\int_M |u|^{q_s^*}\,d\mu\right)^{q/q_s^*}
\le
\left(
\frac{K(n,s,q)}{k^{sq/n}}+\varepsilon
\right)
\iint_{M\times M}
|u(x)-u(y)|^q K_q^s(x,y)\,d\mu(x)\,d\mu(y)
+
B_\varepsilon\int_M |u|^q\,d\mu
\]
for every \(u\in W_G^{s,q}(M)\).
Here \(K(n,s,q)\) is the sharp Euclidean constant from \cite{Tan2025SharpFS}, and, by convention,
\[
\frac{K(n,s,q)}{k^{sq/n}}=0
\qquad \text{if } k=+\infty.
\]
\end{theorem}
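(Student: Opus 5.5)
We follow the Hebey--Vaugon scheme for orbit-cardinality improvements, adapted to the nonlocal seminorm: cut $u$ into pieces supported in small $G$-invariant neighbourhoods of orbits, apply the sharp local Euclidean inequality to each piece, and gain the factor $k^{-sq/n}$ from the fact that a $G$-invariant function carries $\Card O_G^x\ge k$ identical copies of its local profile.

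\textbf{Step 1: input from the sharp inequality.} From \cite{Tan2025SharpFS} we take the sharp local inequality. For every $\varepsilon>0$ there are $\delta>0$ and $C_\varepsilon$ such that, for every $x_0\in M$ and every $v$ supported in the geodesic ball $B(x_0,\delta)$,
\[
\|v\|_{L^{q_s^*}}^q\le (K(n,s,q)+\varepsilon)\,[v]_{W^{s,q}(M)}^q+C_\varepsilon\|v\|_{L^q}^q .
\]
This rests on the short-time asymptotics $K_q^s(x,y)\sim c\,d_g(x,y)^{-n-sq}$ near the diagonal, with $c$ normalised to match the Euclidean constant.

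\textbf{Step 2: the case $k<\infty$ and an orbit-adapted partition of unity.} By compactness of $M$ and $G$, we cover $M$ by finitely many $G$-invariant open sets $\Omega_j=G\cdot B(x_j,\delta_j)$. We choose $\delta_j$ so small that $\Omega_j$ is a disjoint union of $N_j\ge k$ isometric copies $\sigma_i(B_j')$ of a small set $B_j'$ of diameter $<\delta$. This is possible because near a point with finite orbit the stabiliser acts on a slice, and nearby orbits have cardinality $\ge \Card O_G^{x_j}$. If some orbits are infinite, the orbit through $x_j$ is at least $1$-dimensional, so we can still find $N_j$ disjoint translates for any prescribed $N_j\ge k$, because $\Omega_j$ contains many disjoint isometric copies of a thin slice neighbourhood. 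We then take a $G$-invariant partition of unity $\eta_j$ subordinate to $\Omega_j$, obtained by averaging over $G$, and use $\eta_j^{q/\cdots}$-type powers as in the Hebey--Vaugon argument so that $\sum_j \|\eta_j u\|_{L^{q_s^*}}^q\ge \|u\|_{L^{q_s^*}}^q$ (via $\||u|^q\|_{L^{q_s^*/q}}\le\sum_j\|\eta_j|u|^q\|$, with $\eta_j$ replaced by $\eta_j^{1/q}$).

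\textbf{Step 3: localisation and counting.} For $u\in W_G^{s,q}(M)$ we set $u_j=\eta_j^{1/q}u$ and split $u_j=\sum_i u_j\mathbf 1_{\sigma_i B_j'}$. The pieces are translates of one another by isometries, so
\[
\|u_j\|_{L^{q_s^*}}^q=N_j^{q/q_s^*}\|v_j\|_{L^{q_s^*}}^q ,
\]
where $v_j$ is a single piece, while $\|u_j\|_{L^q}^q=N_j\|v_j\|^q_{L^q}$. For the seminorm, we need a $G$-adapted cut-off $\tilde v_j$ of the single piece with
\[
[u_j]^q\ge N_j[\tilde v_j]^q-C\|u\|^q_{L^q}.
\]
The cross terms between distinct copies are nonnegative contributions, since the copies are separated by a positive distance and $K_q^s$ is bounded there, so they are controlled by $L^q$ norms. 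Applying Step 1 to $\tilde v_j$ and using $q/q_s^*=1-sq/n$, we get
\[
\|u_j\|^q_{L^{q_s^*}}\le N_j^{-sq/n}(K+\varepsilon)[u_j]^q+C\|u\|_{L^q}^q\le k^{-sq/n}(K+\varepsilon)[u_j]^q+C\|u\|_{L^q}^q .
\]

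\textbf{Step 4: summing.} We sum over $j$. The final estimate is
\[
\sum_j[\eta_j^{1/q}u]^q\le (1+\varepsilon)[u]^q+C_\varepsilon\|u\|^q_{L^q}.
\]
This is the main obstacle, since it is a nonlocal Leibniz estimate. We would write
\[
\eta_j^{1/q}(x)u(x)-\eta_j^{1/q}(y)u(y)=\eta_j^{1/q}(x)(u(x)-u(y))+u(y)\bigl(\eta_j^{1/q}(x)-\eta_j^{1/q}(y)\bigr),
\]
and use $(a+b)^q\le(1+\varepsilon)a^q+C_\varepsilon b^q$. The identity $\sum_j\eta_j=1$ disposes of the first term. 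For the second term we use the Lipschitz bound on $\eta_j^{1/q}$, which needs smooth $q$-th roots, so we take $\eta_j=\phi_j^{q}/\sum\phi_i^q$ with a suitable power. Then
\[
\int|\eta_j^{1/q}(x)-\eta_j^{1/q}(y)|^qK_q^s\,d\mu(y)\le C\int\min(1,d^q)\,d^{-n-sq}\,d\mu(y)<\infty
\]
because $sq<q$, which gives a bounded multiple of $\|u\|_{L^q}^q$.

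\textbf{Step 5: the case $k=+\infty$.} We repeat the construction with an arbitrary $N$ in place of $k$. All orbits are infinite, so every $\Omega_j$ admits $N$ disjoint copies, and the constant obtained is $K/N^{sq/n}+\varepsilon$. Choosing $N$ large gives an arbitrarily small constant.
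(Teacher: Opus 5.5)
Your treatment of neighbourhoods of \emph{finite} orbits is sound and is essentially the paper's argument with different bookkeeping. There \(G\cdot B(x_j,\delta)\) is a union of \(\Card O_G^{x_j}\ge k\) balls at positive mutual distance, and the pieces of the \(G\)-invariant function \(\eta_j^{1/q}u\) are \(G\)-translates of one another. The cross terms are bounded by \(C\|u\|_{L^q}^q\) because \(K_q^s\) is bounded away from the diagonal, and your Step 4 is the paper's Lemma~5.2 summed against \(\sum_j\eta_j=1\). The paper instead uses non-invariant cut-offs \(\alpha_{ij}\) permuted by \(G\), together with the identity \(m_i\iint\eta_{i1}(x)F=\sum_j\iint\eta_{ij}(x)F\); the two are equivalent.

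The genuine gap is the treatment of neighbourhoods of \emph{infinite} orbits, on which both the mixed case of Step 2 and all of Step 5 rest. If \(O_G^{x_j}\) has positive dimension, \(G\cdot B(x_j,\delta)\) is a tube around a positive-dimensional compact submanifold, and each component of the tube is connected. A \(G\)-invariant function is constant along orbit directions, so \(\eta_j^{1/q}u\) is typically nonzero on a whole connected tube, and no decomposition of the kind you use exists.
\begin{itemize}
\item If your copies \(\sigma_iB_j'\) cover the support, adjacent copies share a boundary. Then \(u_j\mathbf 1_{\sigma_iB_j'}\) has jumps and is in general not in \(W^{s,q}(M)\) once \(sq\ge 1\). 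Moreover, the cross terms between adjacent copies are not controlled by \(L^q\) norms, since \(K_q^s\) blows up across the common boundary. So \([u_j]^q\ge N_j[\tilde v_j]^q-C\|u\|_{L^q}^q\) fails.
\item If the copies are separated, they miss part of the support. Then \(u_j\ne\sum_i u_j\mathbf 1_{\sigma_iB_j'}\), and the identity \(\|u_j\|_{L^{q_s^*}}^q=N_j^{q/q_s^*}\|v_j\|_{L^{q_s^*}}^q\) fails.
\end{itemize}
You also never exhibit the elements of \(G\) that would permute such pieces. A counting-type repair would need overlapping smooth cut-offs of bounded multiplicity permuted by a finite subgroup of \(G\), which is a different and delicate construction.

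The idea you are missing is the paper's Lemma~5.1. Near an infinite orbit, pass to the identity component \(G_0\), whose orbits there are connected components of the \(G\)-orbits and are still positive-dimensional. By the dimension reduction behind Theorem~1.1, \(G_0\)-invariant functions supported in a thin tube depend on at most \(n-k_O\) transversal variables. They therefore embed continuously into some \(L^r\) with \(r>q_s^*\), hence compactly into \(L^{q_s^*}\). Ehrling's lemma then gives \(\|u\|_{L^{q_s^*}}^q\le\varepsilon[u]_{W^{s,q}(M)}^q+B_\varepsilon\|u\|_{L^q}^q\) for such pieces, with \(\varepsilon\) arbitrary. This arbitrarily small constant is what replaces your factor \(N^{-sq/n}\) with \(N\to\infty\). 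With it, the case \(k=+\infty\) needs no separate argument, because every piece is then of this type.
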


The paper is organized as follows.
In Section~2, we present the preliminary material needed in the sequel, including basic facts from Riemannian geometry, compact group actions, and fractional Sobolev spaces.
Section~3 contains the proof of Theorem~\ref{thm1.1}.
In Section~4, we prove Theorem~\ref{thm1.2} and determine the optimal second constant in the improved critical inequality.
Section~5 is devoted to the proof of Theorem~\ref{thm1.4}, concerning the optimal first constant in the standard critical inequality under symmetry constraints.

\section{Preliminary results}

In this section, we collect some standard facts from Riemannian geometry, compact group actions, and fractional Sobolev spaces that will be used throughout the paper.
For background material, we refer to Chavel \cite{MR1271141}, do Carmo \cite{MR1138207}, Gallot--Hulin--Lafontaine \cite{MR1083149}, Hebey \cite{MR1481970}, and Jost \cite{MR1351009}.

Let \((M,g)\) be a closed Riemannian manifold.
If \(\gamma:[a,b]\to M\) is a piecewise \(C^1\) curve, its length is defined by
\[
L(\gamma)=\int_a^b |\dot\gamma(t)|_g\,dt
=\int_a^b \sqrt{g_{\gamma(t)}(\dot\gamma(t),\dot\gamma(t))}\,dt.
\]
For \(x,y\in M\), let \(C_{x,y}^1\) denote the set of piecewise \(C^1\) curves \(\gamma:[a,b]\to M\) such that \(\gamma(a)=x\) and \(\gamma(b)=y\).
The Riemannian distance associated with \(g\) is defined by
\[
d_g(x,y)=\inf_{\gamma\in C_{x,y}^1}L(\gamma).
\]
This distance induces the original topology of \(M\).

The Riemannian volume measure on \(M\) will be denoted by \(d\mu=d\mu_g\).
In local coordinates, it is given by
\[
d\mu_g=\sqrt{\det(g_{ij})}\,dx^1\cdots dx^n.
\]

Let \(\Isom_g(M)\) denote the isometry group of \((M,g)\).
Since \(M\) is closed, \(\Isom_g(M)\) is a compact Lie group.
Hence every compact subgroup \(G\subset \Isom_g(M)\) is a Lie subgroup.
For such a subgroup \(G\), we define
\[
C_G^\infty(M)
=
\bigl\{u\in C^\infty(M):u\circ \sigma=u \text{ for all } \sigma\in G\bigr\},
\]
and
\[
W_G^{s,q}(M)
=
\bigl\{u\in W^{s,q}(M):u\circ \sigma=u \text{ for all } \sigma\in G\bigr\}.
\]
When needed, we also write \(\mathcal D_G(M)=C_G^\infty(M)\), since \(M\) is compact.

We next recall some facts concerning Riemannian submersions.
Let \((M,g)\) and \((N,h)\) be smooth Riemannian manifolds, and let \(\Pi:M\to N\) be a submersion.
We say that \(\Pi\) is a Riemannian submersion if, for every \(x\in M\), the differential
\[
\Pi_*(x):(H_x,g(x))\to (T_{\Pi(x)}N,h(\Pi(x)))
\]
is an isometry, where \(H_x\) denotes the orthogonal complement of \(T_x(\Pi^{-1}(\Pi(x)))\) in \(T_xM\).

Assume now that \(\dim M>\dim N\), that \(\Pi:M\to N\) is a Riemannian submersion, and that every fiber \(\Pi^{-1}(y)\) is compact.
Define
\[
v(y)=\Vol_g\bigl(\Pi^{-1}(y)\bigr), \qquad y\in N.
\]
Then, for every measurable function \(\phi:N\to \R\) such that \(\phi\,v\in L^1(N,d\mu_h)\), one has
\begin{equation}\label{eq2.1}
\int_M (\phi\circ \Pi)\,d\mu_g
=
\int_N \phi(y)\,v(y)\,d\mu_h(y).
\end{equation}
See, for instance, \cite{MR282313}.

We also recall O'Neill's formula.
If \(\Pi:M\to N\) is a Riemannian submersion, then for any orthonormal vector fields \(X\) and \(Y\) on \(N\), with horizontal lifts \(\widetilde X\) and \(\widetilde Y\), one has
\[
K_{(N,h)}(X,Y)
=
K_{(M,g)}(\widetilde X,\widetilde Y)
+\frac34 \bigl|[\widetilde X,\widetilde Y]^v\bigr|^2,
\]
where \(K_{(M,g)}\) and \(K_{(N,h)}\) denote the sectional curvatures of \((M,g)\) and \((N,h)\), respectively, and \([\widetilde X,\widetilde Y]^v\) denotes the vertical component of the Lie bracket.
See \cite{MR867684,MR909697}.
In particular, if the sectional curvature of \((M,g)\) is bounded from below, then so is the sectional curvature of \((N,h)\), and consequently the Ricci curvature of \((N,h)\) is also bounded from below.

We now turn to compact group actions.
Let \(G\) be a compact subgroup of \(\Isom_g(M)\).
For \(x\in M\), we denote by
\[
O_G^x=\{\sigma(x):\sigma\in G\}
\]
the orbit of \(x\), and by
\[
S_G^x=\{\sigma\in G:\sigma(x)=x\}
\]
the isotropy subgroup at \(x\).
It is classical that \(O_G^x\) is a smooth compact submanifold of \(M\), that the quotient manifold \(G/S_G^x\) exists, and that the canonical map
\[
\Phi_x:G/S_G^x\to O_G^x
\]
is a diffeomorphism.
We refer to \cite{MR717390} for these facts.

An orbit \(O_G^x\) is called principal if, for every \(y\in M\), the isotropy subgroup \(S_G^y\) contains a subgroup conjugate to \(S_G^x\).
Let
\[
\Omega=\bigcup_{\{x:\,O_G^x\text{ is principal}\}} O_G^x.
\]
Then the following properties hold:
\begin{enumerate}
\item \(\Omega\) is an open dense subset of \(M\);
\item if \(\Pi:M\to M/G\) denotes the canonical projection, then \(M/G\) is Hausdorff and \(\Pi\) is proper;
\item \(\Pi(\Omega)=\Omega/G\) has the structure of a smooth connected manifold, and the restriction \(\Pi|_\Omega:\Omega\to \Omega/G\) is a smooth submersion.
\end{enumerate}
See again \cite{MR413144}.

Moreover, the metric \(g\) induces a quotient metric \(h\) on \(\Omega/G\) such that \(\Pi|_\Omega:(\Omega,g)\to (\Omega/G,h)\) is a Riemannian submersion.
The associated distance on the quotient extends to \(M/G\) by
\[
d_h(\Pi(x),\Pi(y))
=
d_g(O_G^x,O_G^y),
\qquad x,y\in M,
\]
where
\[
d_g(O_G^x,O_G^y)=\inf\{d_g(x',y'):x'\in O_G^x,\ y'\in O_G^y\}.
\]
We refer to \cite{MR909697} for the corresponding constructions.

\begin{proposition}[{\cite{Tan2025SharpFS}}]\label{pro2.1}
Let \(s\in(0,1)\) and \(q\in[1,\infty)\).
Then \(W^{s,q}(M)\) is a Banach space.
Moreover, \(C^\infty(M)\) is dense in \(W^{s,q}(M)\).
If \(1<q<\infty\), then \(W^{s,q}(M)\) is reflexive.
\end{proposition}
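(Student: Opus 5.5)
The plan is to treat the three assertions separately. Throughout, I would use the equivalence of \([u]_{W^{s,q}(M)}\) with the geodesic Gagliardo seminorm (stated in the introduction), so that any property that is stable under equivalent norms can be checked with the kernel \(d_g(x,y)^{-n-sq}\) whenever that is more convenient. The basic structural observation is that the map
\[
T:u\longmapsto \bigl(u,\;U\bigr),\qquad U(x,y)=u(x)-u(y),
\]
sends \(W^{s,q}(M)\) isometrically, up to the equivalence of the sum and \(\ell^q\) norms, into
\[
X=L^q(M)\times L^q\bigl(M\times M,\,K_q^s(x,y)\,d\mu(x)\,d\mu(y)\bigr).
\]

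\textbf{Completeness.} Take a Cauchy sequence \((u_j)\) in \(W^{s,q}(M)\). It is Cauchy in \(L^q(M)\), so \(u_j\to u\) in \(L^q(M)\), and along a subsequence \(u_j\to u\) \(\mu\)-a.e. Then \(u_j(x)-u_j(y)\to u(x)-u(y)\) for \(\mu\otimes\mu\)-a.e. \((x,y)\). Fatou's lemma applied to \(|(u_j-u_m)(x)-(u_j-u_m)(y)|^qK_q^s\), letting \(m\to\infty\), gives
\[
[u_j-u]_{W^{s,q}(M)}\le\liminf_{m\to\infty}[u_j-u_m]_{W^{s,q}(M)},
\]
and the right-hand side is small for large \(j\). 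Hence \(u\in W^{s,q}(M)\) and \(u_j\to u\) in norm.

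\textbf{Reflexivity.} The Banach property shows that \(T(W^{s,q}(M))\) is a closed subspace of \(X\). For \(1<q<\infty\), \(X\) is a product of two \(L^q\) spaces over \(\sigma\)-finite measures, so it is reflexive. A closed subspace of a reflexive space is reflexive, and reflexivity is invariant under isomorphism, which gives the last claim.

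\textbf{Density of \(C^\infty(M)\).} This is where the real work lies. I would localize with a finite atlas \(\{(U_i,\varphi_i)\}\) and a smooth partition of unity \(\{\eta_i\}\), writing \(u=\sum_i\eta_iu\).

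The first step is a cutoff estimate: for a Lipschitz \(\eta\) with constant \(L\),
\[
[\eta u]\le C\bigl(\|u\|_{L^q(M)}+[u]\bigr).
\]
This follows from
\[
|\eta u(x)-\eta u(y)|\le|\eta(x)|\,|u(x)-u(y)|+L\,d_g(x,y)\,|u(y)|
\]
together with the uniform bound
\[
\sup_y\int_M d_g(x,y)^{q-n-sq}\,d\mu(x)<\infty,
\]
which holds because \(q(1-s)>0\). So each \(\eta_iu\) lies in \(W^{s,q}(M)\) and is supported in a fixed compact subset of \(U_i\).

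The second step is to transfer each piece to \(\R^n\) via \(\varphi_i\). There the metric distance is comparable to the Euclidean one and the volume density is bounded above and below. This identifies the local geodesic seminorm with the Euclidean \([\cdot]_{s,q}\), up to constants, for functions supported in a fixed compact set \(K\subset\varphi_i(U_i)\). The contribution of pairs \((x,y)\) with one point far from \(K\) is controlled by \(\|\cdot\|_{L^q}\).

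The third step is to mollify, \(v_\varepsilon=\rho_\varepsilon*v\). The standard argument applies: Minkowski's integral inequality gives
\[
[v_\varepsilon-v]_{s,q}\le\int\rho_\varepsilon(h)\,\bigl[v(\cdot-h)-v\bigr]_{s,q}\,dh,
\]
and the right-hand side tends to \(0\) by continuity of translations in \(L^q(\R^{2n})\) applied to the function \(\dfrac{v(x)-v(y)}{|x-y|^{n/q+s}}\). The supports stay inside \(\varphi_i(U_i)\) for small \(\varepsilon\).

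Finally, pulling back and summing gives smooth functions converging to \(u\) in \(W^{s,q}(M)\). I expect the main obstacle to be the bookkeeping in the second step, namely the comparison between local geodesic and Euclidean seminorms and the treatment of the off-diagonal and far-field pairs; I would handle both by splitting \(M\times M\) into a near-diagonal region, where the charts apply, and its complement, where the kernel is bounded.
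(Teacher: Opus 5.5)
Your proof is correct. The paper gives no argument for Proposition~\ref{pro2.1}; it quotes the result from \cite{Tan2025SharpFS}. So what you have written is a self-contained replacement for a citation, not an alternative to an in-paper proof.

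Your three steps are the standard ones:
\begin{itemize}
\item Fatou's lemma for completeness.
\item For reflexivity, the isometric embedding $u\mapsto\bigl(u,\,u(x)-u(y)\bigr)$ into $L^q(M)\times L^q(M\times M,K_q^s\,d\mu\,d\mu)$.
\item For density, a partition of unity, transfer to charts, and Euclidean mollification.
\end{itemize}

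The only external input is the two-sided bound $K_q^s(x,y)\asymp d_g(x,y)^{-n-sq}$ (Remark~3.4 of \cite{Tan2025SharpFS}). The paper relies on the same bound in Lemmas~\ref{lem3.4} and \ref{lem5.2}. Your cutoff estimate is the same computation as Lemma~\ref{lem5.2}: a Lipschitz bound on the cutoff plus uniform integrability of $d_g^{-n+(1-s)q}$. Your chart comparison uses the same splitting as Lemma~\ref{lem3.4}: a near-diagonal region where the chart is bi-Lipschitz, and a far region where the kernel is bounded and the contribution is controlled by the $L^q$ norm.

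What your version buys is transparency. It shows that the proposition, and hence the invariant density result Lemma~\ref{lem2.2}, depends only on this kernel comparison and not on finer heat-kernel structure. What the citation buys is avoiding the routine localization bookkeeping, which you correctly identify as the bulk of the work.
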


\begin{lemma}\label{lem2.2}
Let \(G\) be a compact subgroup of \(\Isom_g(M)\).
Then \(C_G^\infty(M)\) is dense in \(W_G^{s,q}(M)\).
\end{lemma}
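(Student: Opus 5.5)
The plan is to average a smooth approximant over \(G\) with respect to normalized Haar measure. Let \(u\in W_G^{s,q}(M)\). By Proposition~\ref{pro2.1} there are \(u_m\in C^\infty(M)\) with \(u_m\to u\) in \(W^{s,q}(M)\). Define
\[
\widetilde u_m(x)=\int_G u_m(\sigma(x))\,d\sigma ,
\]
where \(d\sigma\) is the Haar probability measure on the compact Lie group \(G\). Since \(G\) acts smoothly on \(M\) (it is a Lie subgroup of \(\Isom_g(M)\)), differentiation under the integral gives \(\widetilde u_m\in C^\infty(M)\). Left invariance of Haar measure gives \(\widetilde u_m\circ\tau=\widetilde u_m\) for every \(\tau\in G\), so \(\widetilde u_m\in C_G^\infty(M)\).

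The key step is showing that averaging does not increase the \(W^{s,q}\) norm of a difference. Because \(u\) is \(G\)-invariant, we can write
\[
\widetilde u_m-u=\int_G (u_m-u)\circ\sigma\,d\sigma .
\]
Each \(\sigma\in G\) is an isometry, so it preserves \(d\mu\). It also preserves the heat kernel, \(K_M(t,\sigma x,\sigma y)=K_M(t,x,y)\), since the heat kernel is uniquely determined by the metric. Hence \(K_q^s(\sigma x,\sigma y)=K_q^s(x,y)\), and so
\[
\|w\circ\sigma\|_{L^q}=\|w\|_{L^q},\qquad [w\circ\sigma]_{W^{s,q}}=[w]_{W^{s,q}}\quad\text{for all } w\in W^{s,q}(M).
\]
Both \(\|\cdot\|_{L^q}\) and \([\cdot]_{W^{s,q}}\) are convex; the seminorm is the \(L^q\) norm of \((x,y)\mapsto w(x)-w(y)\) with respect to the measure \(K_q^s\,d\mu\,d\mu\). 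We then apply Minkowski's integral inequality in \(\sigma\):
\[
\bigl[\widetilde u_m-u\bigr]_{W^{s,q}}\le\int_G[(u_m-u)\circ\sigma]_{W^{s,q}}\,d\sigma=[u_m-u]_{W^{s,q}},
\]
and similarly for the \(L^q\) norm. This yields \(\|\widetilde u_m-u\|_{W^{s,q}}\le\|u_m-u\|_{W^{s,q}}\to0\), which proves density.

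The main obstacle is justifying Minkowski's integral inequality. This needs joint measurability of \((\sigma,x,y)\mapsto (u_m-u)(\sigma x)-(u_m-u)(\sigma y)\). We will get it from continuity of the action map \(G\times M\to M\) together with measurability of \(u\), working with a Borel representative. We also need Fubini to identify \(\widetilde u_m-u\) almost everywhere with the averaged function, and this uses \(G\)-invariance of \(u\) almost everywhere, as given in the definition of \(W_G^{s,q}(M)\). An alternative route avoids these technicalities: since \(G\) acts by isometries, the map \(w\mapsto\int_G w\circ\sigma\,d\sigma\) is a Bochner integral of the continuous map \(\sigma\mapsto w\circ\sigma\) into the Banach space \(W^{s,q}(M)\). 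Continuity of this map follows from the isometric action and density of smooth functions, via an \(\varepsilon/3\) argument. The norm bound is then the standard Bochner estimate.
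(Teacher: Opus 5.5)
Your proposal is correct and follows essentially the same route as the paper: you average a smooth approximant over $G$ with respect to normalized Haar measure, then use the $G$-invariance of $d\mu$ and of $K_q^s$ together with Minkowski's integral inequality to get $\|\widetilde u_m-u\|_{W^{s,q}(M)}\le\|u_m-u\|_{W^{s,q}(M)}$. Your additional remarks on smoothness of the average, measurability, and the Bochner alternative only make explicit what the paper leaves implicit; one small correction is that the step $\widetilde u_m\circ\tau=\widetilde u_m$ uses right (not left) invariance of Haar measure, which holds because compact groups are unimodular.
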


\begin{proof}
Let \(u\in W_G^{s,q}(M)\).
By Proposition~\ref{pro2.1}, there exists a sequence \((u_j)\subset C^\infty(M)\) such that
\[
u_j\to u
\qquad \text{in } W^{s,q}(M).
\]
Let \(d\sigma\) be the normalized Haar measure on \(G\), and define
\[
v_j(x)=\int_G u_j(\sigma x)\,d\sigma.
\]
Then \(v_j\in C_G^\infty(M)\).

Since every \(\sigma\in G\) is an isometry and the kernel \(K_q^s\) is invariant under isometries, composition with \(\sigma\) preserves the \(W^{s,q}(M)\)-norm.
Hence, by Minkowski's inequality,
\[
\|v_j-u\|_{W^{s,q}(M)}
\le
\int_G \|u_j\circ \sigma-u\circ \sigma\|_{W^{s,q}(M)}\,d\sigma
=
\|u_j-u\|_{W^{s,q}(M)}.
\]
Therefore \(v_j\to u\) in \(W^{s,q}(M)\).
Since \(u\) is \(G\)-invariant, this proves the density of \(C_G^\infty(M)\) in \(W_G^{s,q}(M)\).
\end{proof}

\section{Proof of Theorem \ref{thm1.1}}

We begin with a basic fact concerning orbit dimensions.

\begin{proposition}\label{pro3.1}
Let \(G\) be a compact Lie group acting smoothly on a closed manifold \(M\).
Then every orbit \(O_G^x\) is compact.
Moreover,
\[
\Card O_G^x<\infty \quad \Longleftrightarrow \quad \dim O_G^x=0.
\]
In particular, if \(O_G^x\) is infinite, then \(\dim O_G^x\ge 1\).
\end{proposition}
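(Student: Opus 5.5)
The plan is to use the orbit map \(\Phi_x:G/S_G^x\to O_G^x\), recalled above, which is a diffeomorphism onto the orbit. First, compactness: \(O_G^x\) is the image of the compact group \(G\) under the continuous map \(\sigma\mapsto\sigma(x)\). Since \(M\) is Hausdorff, \(O_G^x\) is a compact subset of \(M\). Because \(\Phi_x\) is a diffeomorphism onto a smooth compact submanifold, the orbit is moreover an embedded compact submanifold with \(\dim O_G^x=\dim G-\dim S_G^x\).

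For the equivalence, I would argue each direction through the homogeneous space \(G/S_G^x\).

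\emph{Finite implies dimension zero.} A finite subset of a manifold is discrete. A submanifold of positive dimension is locally diffeomorphic to an open subset of \(\R^d\) with \(d\ge1\), so it has uncountably many points. Hence a finite orbit must have dimension \(0\).

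\emph{Dimension zero implies finite.} If \(\dim O_G^x=0\), then \(O_G^x\) is a \(0\)-dimensional manifold, so every point is isolated in the orbit topology. Since \(O_G^x\) is embedded, this is also the subspace topology from \(M\). A compact discrete space is finite: cover it by the singletons, which are open, and extract a finite subcover.

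Equivalently, \(\dim O_G^x=0\) means \(\dim S_G^x=\dim G\). Then the closed subgroup \(S_G^x\) contains the identity component \(G_0\), which is open in \(G\). Compactness of \(G\) gives \([G:G_0]<\infty\), so \(\Card O_G^x=[G:S_G^x]\le[G:G_0]<\infty\).

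The final assertion is the contrapositive, since dimensions are nonnegative integers.

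The only delicate point is using the subspace topology when passing from "0-dimensional" to "discrete". To avoid relying on the embeddedness of the orbit, I would rely on the group-theoretic argument via \(G_0\), which only needs that \(S_G^x\) is a closed Lie subgroup. This also gives the claim \(k\ge1\) in Theorem~\ref{thm1.1}.
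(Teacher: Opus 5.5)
Your proof is correct and follows essentially the same route as the paper: compactness of the orbit as the continuous image of \(G\), identification of the orbit with the homogeneous space \(G/S_G^x\), and the observation that a compact \(0\)-dimensional manifold is finite while a positive-dimensional one is uncountable. Your extra argument via the identity component \(G_0\), giving \(\Card O_G^x=[G:S_G^x]\le[G:G_0]<\infty\), is a valid alternative; it is not needed, because \(G/S_G^x\) is already compact in its own quotient topology, so the worry about the subspace topology does not arise.
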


\begin{proof}
Fix \(x\in M\), and consider the orbit map
\[
\Phi_x:G\to M,\qquad \Phi_x(\sigma)=\sigma(x).
\]
Since \(G\) is compact and \(\Phi_x\) is continuous, the orbit
\[
O_G^x=\Phi_x(G)
\]
is compact.

It is standard that \(O_G^x\) is a smooth immersed submanifold of \(M\), diffeomorphic to \(G/S_G^x\), and hence a compact manifold.
If \(\dim O_G^x=0\), then \(O_G^x\) is a compact \(0\)-dimensional manifold, therefore discrete and compact, and thus finite.
Conversely, if \(O_G^x\) is finite, then it is a discrete submanifold, so \(\dim O_G^x=0\).
The final assertion follows immediately.
\end{proof}

We next recall the standard local normal form for the study of invariant functions.

\begin{lemma}\label{lem3.2}
Let \((M,g)\) be a closed Riemannian \(n\)-manifold, let \(G\) be a compact subgroup of \(\Isom_g(M)\), and let \(x\in M\).
Set
\[
k_x=\dim O_G^x.
\]
Then there exists a coordinate chart \((\Omega,\varphi)\) centered at \(x\) such that
\[
\varphi(\Omega)=U\times V,
\]
where \(U\subset \R^{k_x}\) and \(V\subset \R^{n-k_x}\) are open sets, and such that
\[
U\times \{\Pi_2(\varphi(y))\}\subset \varphi(O_G^y\cap \Omega)
\qquad \text{for every } y\in \Omega,
\]
where \(\Pi_2:\R^{k_x}\times \R^{n-k_x}\to \R^{n-k_x}\) denotes the second projection.
\end{lemma}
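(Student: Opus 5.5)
The plan is to build the chart from the orbit map. Let \(\mathfrak g\) be the Lie algebra of \(G\) and \(\mathfrak s_x\) the Lie algebra of the isotropy group \(S_G^x\). Choose a complement \(\mathfrak m\) of \(\mathfrak s_x\) in \(\mathfrak g\), so that \(\dim\mathfrak m=k_x\). The map \(X\mapsto \exp(X)\cdot x\), restricted to a small ball \(U_0\subset\mathfrak m\cong\R^{k_x}\), is then an embedding onto a neighbourhood of \(x\) in \(O_G^x\). This uses the diffeomorphism \(\Phi_x:G/S_G^x\to O_G^x\) recalled in Section~2, and the standard fact that \(\exp|_{\mathfrak m}\) gives a local section of \(G\to G/S_G^x\).

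For the transversal directions, let \(N_x=(T_xO_G^x)^\perp\), of dimension \(n-k_x\), and let \(V_0\subset N_x\cong\R^{n-k_x}\) be a small ball. Define
\[
\psi:U_0\times V_0\to M,\qquad \psi(X,v)=\exp(X)\cdot\exp_x^g(v),
\]
where \(\exp_x^g\) is the Riemannian exponential map. Its differential at \((0,0)\) maps \(\mathfrak m\) isomorphically onto \(T_xO_G^x\), since it is the differential of the orbit map. It also maps \(N_x\) identically onto \(N_x\). Hence \(d\psi_{(0,0)}\) is an isomorphism onto \(T_xM=T_xO_G^x\oplus N_x\). By the inverse function theorem, after shrinking to balls \(U\subset U_0\) and \(V\subset V_0\), \(\psi\) is a diffeomorphism of \(U\times V\) onto an open set \(\Omega\ni x\). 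Set \(\varphi=\psi^{-1}\), so \(\varphi(\Omega)=U\times V\) and \(\varphi(x)=(0,0)\).

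It remains to check the inclusion. Take \(y\in\Omega\) and write \(\varphi(y)=(X_0,v_0)\), so \(\Pi_2(\varphi(y))=v_0\) and \(y=\exp(X_0)\cdot z\) with \(z=\exp_x^g(v_0)\). For any \(X\in U\), the point \(\psi(X,v_0)=\exp(X)\cdot z\) lies in \(O_G^z=O_G^y\) and also in \(\Omega\). Therefore \((X,v_0)=\varphi(\psi(X,v_0))\in\varphi(O_G^y\cap\Omega)\), which is exactly \(U\times\{\Pi_2(\varphi(y))\}\subset\varphi(O_G^y\cap\Omega)\).

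The only step needing care is the nondegeneracy of \(d\psi\) at the origin, i.e. that the infinitesimal orbit map \(\mathfrak m\to T_xO_G^x\), \(X\mapsto \frac{d}{dt}\big|_{t=0}\exp(tX)\cdot x\), is injective. This holds because its kernel is exactly \(\mathfrak s_x\), and \(\mathfrak m\cap\mathfrak s_x=\{0\}\); surjectivity then follows from the dimension count \(\dim\mathfrak m=k_x\). Nothing else is required. The construction needs only the inclusion, not equality, so no principal-orbit or slice-theorem structure is needed, and the case \(k_x=0\) is trivial with \(U=\R^0\).
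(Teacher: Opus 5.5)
Your proof is correct. The paper gives no argument of its own and simply cites Hebey's Lemma~9.1; your construction \(\psi(X,v)=\exp(X)\cdot\exp_x^g(v)\) on a ball in a complement \(\mathfrak m\) of \(\mathfrak s_x\) times a ball in the normal space \(N_x\), followed by the inverse function theorem, is essentially the standard argument behind that citation. The inclusion comes for free because each slice \(U\times\{v\}\) is mapped into the single orbit \(O_G^{\exp_x^g(v)}\). Moreover, choosing \(U,V\) as balls whose closures lie inside the inverse-function-theorem neighbourhood also yields the boundedness, smooth-boundary and bi-Lipschitz properties (ii) and (iv) that the paper imposes on its covering right afterwards.
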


\begin{proof}
This is the local description of a compact group action near an orbit.
See Hebey \cite[Lemma 9.1]{MR1688256}.
\end{proof}

Since \(M\) is compact, Lemma \ref{lem3.2} yields a finite covering
\[
M=\bigcup_{m=1}^N \Omega_m
\]
with charts \((\Omega_m,\varphi_m)\) such that, for every \(m\),
\begin{enumerate}
\item[(i)] \(\varphi_m(\Omega_m)=U_m\times V_m\), where \(U_m\subset \R^{k_m}\), \(V_m\subset \R^{d_m}\), and
\[
d_m=n-k_m,\qquad k_m\ge k;
\]
\item[(ii)] \(U_m\) and \(V_m\) are bounded, \(V_m\) has smooth boundary, and \(U_m\) contains a Euclidean ball;
\item[(iii)] for every \(y\in \Omega_m\),
\[
U_m\times \{\Pi_2(\varphi_m(y))\}\subset \varphi_m(O_G^y\cap \Omega_m);
\]
\item[(iv)] \(\varphi_m\) and \(\varphi_m^{-1}\) are bi-Lipschitz on \(\overline{\Omega_m}\).
\end{enumerate}

For \(u\in C_G^\infty(M)\), property (iii) implies that \(u\circ \varphi_m^{-1}\) is independent of the \(U_m\)-variable.
Hence there exists a smooth function
\[
\widetilde u_m\in C^\infty(V_m)
\]
such that
\begin{equation}\label{eq3.1}
u\circ \varphi_m^{-1}(x,z)=\widetilde u_m(z)
\qquad \text{for all } (x,z)\in U_m\times V_m.
\end{equation}

\begin{lemma}\label{lem3.3}
Let \(U\subset \R^\ell\) be a bounded open set with nonempty interior, and let \(d\ge 1\).
Fix \(N>\ell\), and define
\[
J(\xi)=\iint_{U\times U}\frac{dx\,dy}{\bigl(|x-y|+|\xi|\bigr)^N},
\qquad \xi\in \R^d\setminus\{0\}.
\]
Then there exist positive constants \(c\) and \(C\), depending only on \(U\) and \(N\), such that
\[
c\,|\xi|^{-(N-\ell)}\le J(\xi)\le C\,|\xi|^{-(N-\ell)}
\qquad \text{for all } \xi\in (\overline V-\overline V)\setminus\{0\},
\]
where \(V\subset \R^d\) is any fixed bounded set.
\end{lemma}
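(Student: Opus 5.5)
The plan is to prove the upper and lower bounds separately. For both, I set \(r=|\xi|>0\) and use only two facts about \(U\): it is bounded, say \(U\subset B(0,R)\) so that \(|x-y|\le 2R\) on \(U\times U\); and it contains a ball \(B(a,\rho)\). The restriction \(\xi\in(\overline V-\overline V)\setminus\{0\}\) only matters because it gives \(r\le D:=\diam V<\infty\). This bound is what makes a pure power law \(r^{-(N-\ell)}\) possible on the whole range. For large \(r\) the integral behaves like \(|U|^2r^{-N}\), which is not comparable to \(r^{-(N-\ell)}\) uniformly, so the constants will depend on \(D\) (hence on \(V\)) as well as on \(U\) and \(N\). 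I will record that dependence.

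\emph{Upper bound.} For fixed \(x\in U\), substitute \(h=x-y\) and enlarge the domain to all of \(\R^\ell\):
\[
\int_U\frac{dy}{(|x-y|+r)^N}\le\int_{\R^\ell}\frac{dh}{(|h|+r)^N}=r^{\ell-N}\int_{\R^\ell}\frac{dw}{(|w|+1)^N}.
\]
The last step uses the scaling \(h=rw\). The integral in \(w\) is finite precisely because \(N>\ell\). Integrating in \(x\) over \(U\) then gives \(J(\xi)\le |U|\,C_{N,\ell}\,r^{-(N-\ell)}\), valid for every \(r>0\), so this direction needs nothing about \(V\).

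\emph{Lower bound.} I restrict to \(x,y\in B(a,\rho/2)\subset U\), which I denote \(B'\). I then split into two regimes of \(r\).

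If \(r\le\rho/2\), then for each \(x\in B'\) the ball \(B(x,r)\) lies in \(B(a,\rho)\subset U\). On that ball \(|x-y|+r\le 2r\), so
\[
J(\xi)\ge |B'|\,|B_1|\,r^\ell(2r)^{-N}=c_1r^{-(N-\ell)}.
\]
Here \(|B_1|\) is the volume of the unit ball in \(\R^\ell\), and \(c_1\) depends only on \(\rho,\ell,N\).

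If \(\rho/2<r\le D\), I use the crude bound \(|x-y|+r\le 2R+D\) to get \(J(\xi)\ge|U|^2(2R+D)^{-N}\). Since \(r^{-(N-\ell)}\le(\rho/2)^{-(N-\ell)}\) in this regime, the quantity \(|U|^2(2R+D)^{-N}\) is bounded below by a positive multiple of \(r^{-(N-\ell)}\).

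Taking \(c\) to be the minimum of the constants from the two regimes finishes the proof.

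The only delicate point is the second regime of the lower bound. It is the one place the boundedness of \(V\) is genuinely needed, and it is why the constants depend on \(V\) as well as on \(U\) and \(N\). The rest is elementary scaling.
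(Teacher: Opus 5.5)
Your proof is correct and follows the paper's argument: the upper bound comes from enlarging the inner integration to all of \(\R^\ell\) and scaling, and the lower bound comes from a ball \(B_\rho(a)\subset U\) with a split at \(|\xi|=\rho/2\). You are also right that \(c\) must depend on \(\diam V\) as well as on \(U\) and \(N\); the paper uses this implicitly when it asserts that \(J(\xi)\) stays bounded below for \(|\xi|\ge\rho/2\).
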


\begin{proof}
Since \(U\) is bounded, there exists \(R>0\) such that \(U-U\subset B_R(0)\).
Using the change of variables \(\eta=x-y\), we obtain
\[
J(\xi)=\int_{U-U}\frac{m_U(\eta)}{(|\eta|+|\xi|)^N}\,d\eta,
\]
where
\[
m_U(\eta)=\bigl|\{(x,y)\in U\times U:x-y=\eta\}\bigr|
\]
is bounded above by \(|U|\).
Therefore,
\[
J(\xi)\le C\int_{B_R(0)}\frac{d\eta}{(|\eta|+|\xi|)^N}
\le C |\xi|^{\ell-N}.
\]

For the lower bound, choose a ball \(B_\rho(a)\subset U\).
Then, for \(|\eta|<\rho/2\), the overlap measure of \(B_\rho(a)\) and \(B_\rho(a)-\eta\) is bounded below by a positive constant.
Hence
\[
J(\xi)\ge c\int_{B_{\rho/2}(0)}\frac{d\eta}{(|\eta|+|\xi|)^N}.
\]
If \(|\xi|<\rho/2\), restricting further to \(|\eta|<|\xi|\) gives
\[
J(\xi)\ge c\,|\xi|^\ell (2|\xi|)^{-N}=c\,|\xi|^{\ell-N}.
\]
If \(|\xi|\ge \rho/2\), then \(|\xi|^{\ell-N}\) is bounded above on the bounded set \((\overline V-\overline V)\setminus\{0\}\), while \(J(\xi)\) remains bounded below by a positive constant.
This proves the claim.
\end{proof}

We now compare the local \(L^p\) norms and local nonlocal energies of \(u\) and \(\widetilde u_m\).

\begin{lemma}\label{lem3.4}
For every \(m\) and every \(p\ge 1\), there exist constants \(A_m,B_m>0\) such that for every \(u\in C_G^\infty(M)\),
\[
A_m \int_{V_m} |\widetilde u_m(z)|^p\,dz
\le
\int_{\Omega_m} |u|^p\,d\mu
\le
B_m \int_{V_m} |\widetilde u_m(z)|^p\,dz.
\]
Moreover, there exist constants \(c_m,C_m>0\) such that
\[
c_m
\iint_{V_m\times V_m}
\frac{|\widetilde u_m(z)-\widetilde u_m(w)|^q}{|z-w|^{d_m+sq}}
\,dz\,dw
\le
\iint_{\Omega_m\times\Omega_m}
|u(x)-u(y)|^q K_q^s(x,y)\,d\mu(x)\,d\mu(y)
\]
and
\[
\iint_{\Omega_m\times\Omega_m}
|u(x)-u(y)|^q K_q^s(x,y)\,d\mu(x)\,d\mu(y)
\le
C_m
\iint_{V_m\times V_m}
\frac{|\widetilde u_m(z)-\widetilde u_m(w)|^q}{|z-w|^{d_m+sq}}
\,dz\,dw.
\]
\end{lemma}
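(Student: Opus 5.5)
The plan is to pull everything back to the product chart $U_m\times V_m$ via $\varphi_m$ and use the product structure from \eqref{eq3.1}.

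For the $L^p$ comparison, we change variables with $y=\varphi_m^{-1}(x,z)$:
\[
\int_{\Omega_m}|u|^p\,d\mu=\int_{U_m\times V_m}|\widetilde u_m(z)|^p\sqrt{\det g_{ij}}\,dx\,dz .
\]
By property (iv) and compactness of $\overline{\Omega_m}$, the density $\sqrt{\det g_{ij}}$ is bounded above and below by positive constants. Integrating out $x$ then produces the factor $|U_m|$, which lies in $(0,\infty)$ by property (ii). This gives $A_m$ and $B_m$.

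For the energy comparison, the first step is to replace $K_q^s$ by the distance kernel. I would invoke the pointwise two-sided bound
\[
K_q^s(x,y)\asymp d_g(x,y)^{-(n+sq)} \quad\text{for } x\ne y,
\]
which is the estimate underlying the equivalence with the geodesic Gagliardo seminorm stated in the introduction (from \cite{Tan2025SharpFS}, via Gaussian heat-kernel bounds on a closed manifold). By (iv), $d_g(\varphi_m^{-1}(a),\varphi_m^{-1}(b))\asymp|a-b|$ on the chart. Together with the bounded volume density, this shows that the energy over $\Omega_m\times\Omega_m$ is comparable to
\[
\int_{V_m}\!\int_{V_m}|\widetilde u_m(z)-\widetilde u_m(w)|^q\,J(z-w)\,dz\,dw,
\qquad
J(\xi)=\iint_{U_m\times U_m}\frac{dx\,dy}{|(x,z)-(y,w)|^{n+sq}},\quad \xi=z-w .
\]

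Next, since $\frac12(|x-y|+|\xi|)\le|(x-y,\xi)|\le|x-y|+|\xi|$, the integrand of $J$ is comparable to that in Lemma~\ref{lem3.3}. We apply that lemma with $\ell=k_m$, $d=d_m$ and $N=n+sq>k_m$. It gives $J(z-w)\asymp|z-w|^{-(n+sq-k_m)}=|z-w|^{-(d_m+sq)}$ uniformly for $z\ne w$ in $V_m$, since $V_m$ is bounded. Substituting this yields both inequalities, with constants $c_m$ and $C_m$.

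The main obstacle is the first step of the energy comparison: the pointwise comparison of $K_q^s$ with $d_g^{-(n+sq)}$. The equivalence of seminorms is stated in the introduction only in integrated form. I would therefore derive the pointwise version from the standard two-sided Gaussian estimate $K_M(t,x,y)\asymp t^{-n/2}e^{-c\,d_g^2/t}$, valid for $t\le1$ on closed manifolds. The contribution from $t\ge1$ is bounded, and since $d_g$ is bounded on $M$, this bounded piece is dominated by $d_g^{-(n+sq)}$. For the upper bound this suffices. For the lower bound, one only needs $t$ of order $d_g^2$, which is possible because $\diam M<\infty$ lets us take all constants uniform.
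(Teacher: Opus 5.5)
Your proposal is correct and follows the paper's proof essentially step by step. You pull back to $U_m\times V_m$, use the pointwise bound $K_q^s\asymp d_g^{-(n+sq)}$, use property (iv) to get $d_g\asymp |x-y|+|z-w|$, and apply Lemma~\ref{lem3.3} with $\ell=k_m$, $N=n+sq$ to obtain $J_m(\xi)\asymp|\xi|^{-(d_m+sq)}$.

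The only difference is in how the pointwise kernel bound is obtained. The paper simply cites Remark 3.4 of \cite{Tan2025SharpFS}, while you sketch a derivation from two-sided Gaussian heat-kernel estimates for $t\le 1$ together with boundedness of the $t\ge 1$ contribution. That derivation is a valid way to get the bound.
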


\begin{proof}
The \(L^p\) comparison follows directly from \eqref{eq3.1}, the boundedness of \(U_m\), and the equivalence between \(d\mu\) and the Euclidean measure in the chart.

For the nonlocal term, by Remark 3.4 in \cite{Tan2025SharpFS}, there exist constants \(0<c_{M,s,q}\le C_{M,s,q}\) such that
\[
\frac{c_{M,s,q}}{d_g(x,y)^{n+sq}}
\le
K_q^s(x,y)
\le
\frac{C_{M,s,q}}{d_g(x,y)^{n+sq}},
\qquad x\ne y.
\]
Since \(\varphi_m\) and \(\varphi_m^{-1}\) are bi-Lipschitz on \(\overline{\Omega_m}\), there exists \(L_m\ge 1\) such that
\[
L_m^{-1}\bigl(|x-y|+|z-w|\bigr)
\le
d_g\bigl(\varphi_m^{-1}(x,z),\varphi_m^{-1}(y,w)\bigr)
\le
L_m\bigl(|x-y|+|z-w|\bigr)
\]
for all \((x,z),(y,w)\in U_m\times V_m\).
Using \eqref{eq3.1}, we obtain
\[
\iint_{\Omega_m\times\Omega_m}
|u(x)-u(y)|^q K_q^s(x,y)\,d\mu(x)\,d\mu(y)
\asymp
\iint_{V_m\times V_m}
|\widetilde u_m(z)-\widetilde u_m(w)|^q J_m(z-w)\,dz\,dw,
\]
where
\[
J_m(\xi)=\iint_{U_m\times U_m}\frac{dx\,dy}{\bigl(|x-y|+|\xi|\bigr)^{n+sq}}.
\]
Applying Lemma \ref{lem3.3} with \(\ell=k_m\) and \(N=n+sq\), we obtain
\[
J_m(\xi)\asymp |\xi|^{-(d_m+sq)}.
\]
This yields the desired two-sided estimate.
\end{proof}

We now recall the Euclidean fractional Sobolev and Morrey embeddings in the relevant dimension.

\begin{lemma}\label{lem3.5}
Let \(\Omega\subset \R^d\) be a bounded Lipschitz domain, let \(q\in[1,\infty)\), and let \(s\in(0,1)\).

\begin{enumerate}
\item[(i)] If \(sq<d\), then
\[
W^{s,q}(\Omega)\hookrightarrow L^p(\Omega)
\]
continuously for every
\[
1\le p\le \frac{dq}{d-sq}.
\]

\item[(ii)] If \(sq>d\), then
\[
W^{s,q}(\Omega)\hookrightarrow C^{0,\alpha}(\overline{\Omega})
\]
continuously with
\[
\alpha=s-\frac{d}{q}.
\]
In particular,
\[
W^{s,q}(\Omega)\hookrightarrow L^p(\Omega)
\]
continuously for every \(p\ge 1\).
\end{enumerate}
\end{lemma}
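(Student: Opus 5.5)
This lemma is the classical Euclidean fractional Sobolev/Morrey embedding on bounded Lipschitz domains, so the plan is to reduce it to the whole-space inequalities through an extension operator and not to reprove anything intrinsic. We fix a bounded Lipschitz domain \(\Omega\subset\R^d\) and use the standard fact that such domains are \(W^{s,q}\)-extension domains. That is, there is a bounded linear operator
\[
E:W^{s,q}(\Omega)\to W^{s,q}(\R^d),\qquad Eu|_\Omega=u,
\]
obtained from a finite cover of \(\partial\Omega\) by Lipschitz graph charts, reflection across the graph in each chart, and a partition of unity. Using a cutoff we may also take \(Eu\) supported in a fixed ball \(B\supset\overline\Omega\). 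Checking that reflection and the cutoff preserve the Gagliardo seminorm up to constants is routine: a bi-Lipschitz change of variables distorts \(|x-y|^{d+sq}\) only by a multiplicative factor, and the cutoff term is controlled by \(\|u\|_{L^q}\) because \(\int|x-y|^{q-d-sq}\,dy\) is finite near the diagonal.

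For (i), with \(sq<d\), we apply the whole-space fractional Sobolev inequality \(\|v\|_{L^{q^*}(\R^d)}\le C[v]_{s,q}\), where \(q^*=dq/(d-sq)\), to \(v=Eu\). This can be quoted, or proved via the Hardy--Littlewood--Sobolev-type argument in the Di Nezza--Palatucci--Valdinoci survey, which is also valid for \(q=1\). It gives
\[
\|u\|_{L^{q^*}(\Omega)}\le\|Eu\|_{L^{q^*}(\R^d)}\le C\|u\|_{W^{s,q}(\Omega)}.
\]
For \(1\le p<q^*\) we then use Hölder's inequality on the bounded set \(\Omega\): \(\|u\|_{L^p}\le|\Omega|^{1/p-1/q^*}\|u\|_{L^{q^*}}\).

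For (ii), with \(sq>d\), we apply the fractional Morrey estimate to \(v=Eu\). The standard proof is Campanato-style. For a ball \(B_r\) we bound the mean oscillation by Jensen:
\[
\frac{1}{|B_r|}\int_{B_r}|v-v_{B_r}|^q\le\frac{1}{|B_r|^2}\iint_{B_r\times B_r}|v(x)-v(y)|^q\le C r^{sq-d}[v]_{s,q}^q.
\]
The Campanato characterization, or a telescoping sum over dyadic balls, then yields \(|v(x)-v(y)|\le C|x-y|^{s-d/q}[v]_{s,q}\) and \(\sup|v|\le C\|v\|_{W^{s,q}}\). Restricting to \(\Omega\) gives the embedding into \(C^{0,\alpha}(\overline\Omega)\) with \(\alpha=s-d/q\). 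The \(L^p\) statement for every \(p\ge1\) then follows from \(\|u\|_{L^p}\le|\Omega|^{1/p}\|u\|_{L^\infty}\).

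The main obstacle is the existence of the extension operator for general bounded Lipschitz domains, including the case \(q=1\). This is where the Lipschitz assumption is used, and it is the step I would cite rather than reprove (Di Nezza--Palatucci--Valdinoci, Theorem 5.4). If that extension result were not available, I would fall back on the intrinsic route: a domain satisfying the measure-density condition \(|B_r(x)\cap\Omega|\ge cr^d\), which holds for Lipschitz domains, supports the embeddings directly via the same Campanato and averaging arguments run inside \(\Omega\).
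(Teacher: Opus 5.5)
Your proposal is correct and follows essentially the same route as the paper. The paper simply cites Di Nezza--Palatucci--Valdinoci (Theorems 6.7 and 8.2), and you have unpacked those citations into their ingredients: the Lipschitz extension theorem (their Theorem 5.4), the whole-space inequality combined with H\"older's inequality on the bounded set \(\Omega\), and a Campanato-type oscillation estimate for the Morrey case.

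One minor correction: the survey proves the whole-space inequality by an elementary level-set argument, not a Hardy--Littlewood--Sobolev one. That is why it covers \(q=1\), where Hardy--Littlewood--Sobolev fails.
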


\begin{proof}
This is standard.
See \cite[Theorems 6.7 and 8.2]{MR2944369}.
\end{proof}

\begin{lemma}\label{lem3.6}
Let \(\Omega\subset \R^d\) be a bounded Lipschitz domain, let \(s\in(0,1)\), and let \(q\in[1,\infty)\) satisfy
\[
sq=d.
\]
Then, for every \(p\in[1,\infty)\), the embedding
\[
W^{s,q}(\Omega)\hookrightarrow L^p(\Omega)
\]
is continuous and compact.
\end{lemma}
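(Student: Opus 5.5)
We prove Lemma~\ref{lem3.6} by reducing the borderline case \(sq=d\) to the subcritical case of Lemma~\ref{lem3.5}(i), lowering the smoothness slightly.

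\textbf{Step 1 (lowering the smoothness).} Fix \(s'\in(0,s)\). For bounded \(\Omega\) we split the double integral according to whether \(|x-y|<1\) or \(|x-y|\ge 1\). Near the diagonal, \(|x-y|^{-(d+s'q)}\le |x-y|^{-(d+sq)}\). Away from it, we bound \(|u(x)-u(y)|^q\le 2^{q-1}(|u(x)|^q+|u(y)|^q)\) and use that \(\Omega\) is bounded. Together these give
\[
[u]_{W^{s',q}(\Omega)}\le C\bigl([u]_{W^{s,q}(\Omega)}+\|u\|_{L^q(\Omega)}\bigr),
\]
so \(W^{s,q}(\Omega)\hookrightarrow W^{s',q}(\Omega)\) continuously. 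Since \(s'q<sq=d\), Lemma~\ref{lem3.5}(i) gives continuity of \(W^{s,q}(\Omega)\hookrightarrow L^{p}(\Omega)\) for all \(p\le \frac{dq}{d-s'q}\). This exponent tends to \(+\infty\) as \(s'\uparrow s\), so every \(p\in[1,\infty)\) is reached, and continuity follows (using boundedness of \(\Omega\) for the smaller \(p\)).

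\textbf{Step 2 (compactness in \(L^q\)).} We show that a bounded sequence in \(W^{s,q}(\Omega)\) is precompact in \(L^q(\Omega)\). This is the standard fractional Rellich theorem on bounded Lipschitz (extension) domains, e.g.\ \cite[Theorem 7.1]{MR2944369}, which holds for any \(s\in(0,1)\), \(q\ge1\) with no restriction relating \(sq\) and \(d\). If a self-contained argument is wanted, we extend via the Lipschitz extension operator \cite[Theorem 5.4]{MR2944369} to compactly supported functions in \(W^{s,q}(\R^d)\). The Gagliardo seminorm controls \(\|\tau_h u-u\|_{L^q}\lesssim |h|^{s}[u]\), after averaging over a ball of radius \(|h|\). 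Kolmogorov--Riesz then gives precompactness.

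\textbf{Step 3 (compactness in \(L^p\), by interpolation).} Given \(p\in[1,\infty)\), choose \(r>\max(p,q)\). By Step~1 the sequence is bounded in \(L^r\). If \(p\le q\), precompactness in \(L^p\) follows directly from Step~2 and Hölder, since \(\Omega\) is bounded. If \(q<p<r\), we use the interpolation inequality
\[
\|v\|_{L^p}\le \|v\|_{L^q}^{\theta}\|v\|_{L^r}^{1-\theta},\qquad \theta\in(0,1),
\]
applied to differences \(v=u_j-u_k\) of a subsequence that is Cauchy in \(L^q\). The \(L^r\) factor stays bounded, so the subsequence is Cauchy in \(L^p\), which proves compactness.

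The main obstacle is Step~2 when it is not simply quoted, namely the translation estimate near \(\partial\Omega\). This is where the extension operator for Lipschitz domains is needed. I would rely on \cite{MR2944369} for both the extension operator and the compactness result rather than reprove them.
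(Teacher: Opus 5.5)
Your argument is correct and is essentially the paper's: you lower the smoothness to some $s'<s$ and apply the subcritical embedding of Lemma~\ref{lem3.5}(i). The one difference in that step is that the paper gets the seminorm comparison in one line from $|x-y|^{(s-s')q}\le(\diam\Omega)^{(s-s')q}$, instead of splitting near and away from the diagonal. For compactness, the paper quotes the fractional Rellich corollary \cite[Corollary 7.2]{MR2944369} for $W^{s',q}(\Omega)$, whereas you unpack it as Rellich in $L^q$ plus interpolation against the $L^r$ bound; that is exactly how the corollary is proved, so the two routes coincide.
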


\begin{proof}
Fix \(p\in[1,\infty)\).
Choose \(t\in(0,s)\) sufficiently close to \(s\) such that
\[
p<\frac{dq}{d-tq}.
\]
This is possible because
\[
\frac{dq}{d-tq}\to+\infty
\qquad \text{as } t\uparrow s,
\]
and \(d-sq=0\).

We first show that
\[
W^{s,q}(\Omega)\hookrightarrow W^{t,q}(\Omega)
\]
continuously.
Indeed, for every \(u\in W^{s,q}(\Omega)\),
\[
[u]_{W^{t,q}(\Omega)}^q
=
\iint_{\Omega\times\Omega}
\frac{|u(x)-u(y)|^q}{|x-y|^{d+tq}}\,dx\,dy.
\]
Since \(\Omega\) is bounded,
\[
|x-y|^{(s-t)q}\le (\diam \Omega)^{(s-t)q}
\qquad \text{for all } x,y\in \Omega.
\]
Hence
\[
\begin{aligned}
[u]_{W^{t,q}(\Omega)}^q
&=
\iint_{\Omega\times\Omega}
\frac{|u(x)-u(y)|^q}{|x-y|^{d+sq}}
\,|x-y|^{(s-t)q}\,dx\,dy \\
&\le
(\diam \Omega)^{(s-t)q}
\iint_{\Omega\times\Omega}
\frac{|u(x)-u(y)|^q}{|x-y|^{d+sq}}\,dx\,dy \\
&=
(\diam \Omega)^{(s-t)q}[u]_{W^{s,q}(\Omega)}^q.
\end{aligned}
\]
Therefore,
\[
\|u\|_{W^{t,q}(\Omega)}
\le
C\|u\|_{W^{s,q}(\Omega)}.
\]

Since \(tq<d\), Lemma~\ref{lem3.5} yields the continuous embedding
\[
W^{t,q}(\Omega)\hookrightarrow L^p(\Omega),
\]
because
\[
p<\frac{dq}{d-tq}.
\]
Combining the two embeddings, we obtain the continuity of
\[
W^{s,q}(\Omega)\hookrightarrow L^p(\Omega).
\]

We now prove compactness.
Let \((u_j)\) be a bounded sequence in \(W^{s,q}(\Omega)\).
By the continuous embedding just proved, \((u_j)\) is bounded in \(W^{t,q}(\Omega)\).
Since \(tq<d\) and
\[
p<\frac{dq}{d-tq},
\]
the fractional Rellich theorem, see \cite[Corollary 7.2]{MR2944369}, implies that \((u_j)\) is relatively compact in \(L^p(\Omega)\).
Hence
\[
W^{s,q}(\Omega)\hookrightarrow L^p(\Omega)
\]
is compact.
\end{proof}

\begin{lemma}\label{lem3.7}
Assume that \(d_m=n-k_m<sq\).
Then for every \(p\ge 1\), there exists a constant \(C_m(p)>0\) such that for every \(u\in C_G^\infty(M)\),
\[
\|u\|_{L^p(\Omega_m)}
\le
C_m(p)\Bigl(
\|u\|_{L^q(\Omega_m)}
+
\Bigl(
\iint_{\Omega_m\times\Omega_m}
|u(x)-u(y)|^q K_q^s(x,y)\,d\mu(x)\,d\mu(y)
\Bigr)^{1/q}
\Bigr).
\]
\end{lemma}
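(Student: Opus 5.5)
The idea is to move the problem to the reduced chart variable \(\widetilde u_m\) on \(V_m\subset\R^{d_m}\), apply the Euclidean Morrey embedding there, and then come back to \(M\). Fix \(m\) with \(d_m<sq\) and \(p\ge 1\), and let \(u\in C_G^\infty(M)\). By \eqref{eq3.1}, \(u\circ\varphi_m^{-1}(x,z)=\widetilde u_m(z)\) with \(\widetilde u_m\in C^\infty(V_m)\). The first step is the left inequality of the \(L^p\) comparison in Lemma~\ref{lem3.4}, which gives
\[
\|u\|_{L^p(\Omega_m)}\le B_m^{1/p}\,\|\widetilde u_m\|_{L^p(V_m)} .
\]

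Next, \(V_m\) is a bounded domain with smooth boundary, hence Lipschitz, and \(sq>d_m\). So Lemma~\ref{lem3.5}(ii) applies with \(d=d_m\) and yields
\[
\|\widetilde u_m\|_{L^p(V_m)}\le C\Bigl(\|\widetilde u_m\|_{L^q(V_m)}+\Bigl(\iint_{V_m\times V_m}\frac{|\widetilde u_m(z)-\widetilde u_m(w)|^q}{|z-w|^{d_m+sq}}\,dz\,dw\Bigr)^{1/q}\Bigr).
\]

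To return to \(M\), two bounds are needed. The \(L^q\) part of Lemma~\ref{lem3.4} (with \(p=q\)) gives \(\|\widetilde u_m\|_{L^q(V_m)}\le A_m^{-1/q}\|u\|_{L^q(\Omega_m)}\). The lower nonlocal estimate of Lemma~\ref{lem3.4} bounds the Gagliardo seminorm of \(\widetilde u_m\) by \(c_m^{-1/q}\) times \(\bigl(\iint_{\Omega_m\times\Omega_m}|u(x)-u(y)|^qK_q^s\,d\mu\,d\mu\bigr)^{1/q}\). Chaining these three steps gives the claim, with \(C_m(p)\) depending only on \(p,q,s,m\).

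The only delicate point is justifying the use of Lemma~\ref{lem3.5}(ii), which needs \(\widetilde u_m\in W^{s,q}(V_m)\). The function \(\widetilde u_m\) is smooth on \(V_m\) but need not be smooth up to \(\partial V_m\). However, the right-hand side of the Morrey estimate is finite by the comparisons already established, since \(u\) is smooth on \(M\) and so the right-hand quantities on \(\Omega_m\) are finite. Hence \(\widetilde u_m\in W^{s,q}(V_m)\) and the embedding applies. No further approximation is needed.
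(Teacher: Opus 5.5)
Your proposal is correct and follows the paper's proof: you pass to \(\widetilde u_m\) via Lemma~\ref{lem3.4}, apply the Euclidean Morrey embedding of Lemma~\ref{lem3.5}(ii) in dimension \(d_m\), and return to \(\Omega_m\) using the lower \(L^q\) and nonlocal comparisons of Lemma~\ref{lem3.4}. One small slip in labelling: the bound \(\|u\|_{L^p(\Omega_m)}\le B_m^{1/p}\|\widetilde u_m\|_{L^p(V_m)}\) is the right-hand inequality of that \(L^p\) comparison, not the left-hand one.
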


\begin{proof}
By Lemma \ref{lem3.4}, it is enough to estimate \(\widetilde u_m\) on \(V_m\).
Since \(d_m<sq\), Lemma \ref{lem3.5}(ii) gives
\[
\|\widetilde u_m\|_{L^p(V_m)}
\le
C\|\widetilde u_m\|_{W^{s,q}(V_m)}.
\]
Using Lemma \ref{lem3.4} again to control the \(L^q\) norm and the fractional seminorm of \(\widetilde u_m\) by the corresponding quantities of \(u\) on \(\Omega_m\), we obtain the desired result.
\end{proof}

\begin{lemma}\label{lem3.8}
Assume that \(d_m=n-k_m>sq\).
Then for every
\[
1\le p\le \frac{d_m q}{d_m-sq},
\]
there exists a constant \(C_m(p)>0\) such that for every \(u\in C_G^\infty(M)\),
\[
\|u\|_{L^p(\Omega_m)}
\le
C_m(p)\Bigl(
\|u\|_{L^q(\Omega_m)}
+
\Bigl(
\iint_{\Omega_m\times\Omega_m}
|u(x)-u(y)|^q K_q^s(x,y)\,d\mu(x)\,d\mu(y)
\Bigr)^{1/q}
\Bigr).
\]
\end{lemma}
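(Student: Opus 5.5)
The argument follows Lemma~\ref{lem3.7} almost verbatim, with Lemma~\ref{lem3.5}(ii) replaced by Lemma~\ref{lem3.5}(i). We reduce to the Euclidean function $\widetilde u_m$ on $V_m\subset\R^{d_m}$ via \eqref{eq3.1}. Then we apply the subcritical fractional Sobolev embedding in dimension $d_m$, and finally return to $u$ on $\Omega_m$ using Lemma~\ref{lem3.4}. Since $V_m$ is bounded with smooth boundary by property (ii) of the covering, it is in particular a bounded Lipschitz domain, so Lemma~\ref{lem3.5} applies with $d=d_m$.

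\textbf{Step 1.} Fix $p$ with $1\le p\le \frac{d_mq}{d_m-sq}$ and let $u\in C_G^\infty(M)$. By the first part of Lemma~\ref{lem3.4} applied with this exponent $p$,
\[
\|u\|_{L^p(\Omega_m)}\le B_m^{1/p}\,\|\widetilde u_m\|_{L^p(V_m)}.
\]

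\textbf{Step 2.} Since $d_m>sq$, Lemma~\ref{lem3.5}(i) gives
\[
\|\widetilde u_m\|_{L^p(V_m)}\le C\Bigl(\|\widetilde u_m\|_{L^q(V_m)}+\Bigl(\iint_{V_m\times V_m}\frac{|\widetilde u_m(z)-\widetilde u_m(w)|^q}{|z-w|^{d_m+sq}}\,dz\,dw\Bigr)^{1/q}\Bigr).
\]
The function $\widetilde u_m$ is smooth on $V_m$. It lies in $W^{s,q}(V_m)$ because the two right-hand quantities are finite, which follows from Lemma~\ref{lem3.4} and the finiteness of the corresponding quantities for $u$.

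\textbf{Step 3.} We bound the two terms on the right of Step 2 using Lemma~\ref{lem3.4} again. The $L^p$ comparison with exponent $q$ gives $\|\widetilde u_m\|_{L^q(V_m)}\le A_m^{-1/q}\|u\|_{L^q(\Omega_m)}$. The lower nonlocal estimate gives that the Gagliardo seminorm of $\widetilde u_m$ is at most $c_m^{-1/q}$ times the local $K_q^s$-energy of $u$ on $\Omega_m$, taken to the power $1/q$.

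Combining Steps 1 to 3 yields the claimed inequality with $C_m(p)=B_m^{1/p}C\max\{A_m^{-1/q},c_m^{-1/q}\}$. The only point needing care is that the constants in Lemma~\ref{lem3.4} depend on the exponent, so the $L^p$ comparison must be invoked separately for $p$ and for $q$; the lemma allows this, since it is stated for every $p\ge1$. There is no real obstacle. The nontrivial content, namely the comparison of the kernel $J_m$, was already handled in Lemmas~\ref{lem3.3} and \ref{lem3.4}.
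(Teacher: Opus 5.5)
Your proposal is correct and follows the same route as the paper. You pass to $\widetilde u_m$ via \eqref{eq3.1}, apply Lemma~\ref{lem3.5}(i) in dimension $d_m$ on the bounded smooth (hence Lipschitz) domain $V_m$, and return to $u$ on $\Omega_m$ through the $L^p$, $L^q$ and lower nonlocal comparisons of Lemma~\ref{lem3.4}; your explicit tracking of the exponent-dependent constants $B_m^{1/p}$, $A_m^{-1/q}$, $c_m^{-1/q}$ just spells out what the paper compresses into ``the conclusion now follows from Lemma~\ref{lem3.4}.''
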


\begin{proof}
By Lemma \ref{lem3.5}(i) in dimension \(d_m\),
\[
\|\widetilde u_m\|_{L^p(V_m)}
\le
C\|\widetilde u_m\|_{W^{s,q}(V_m)}
\qquad
\text{for }
1\le p\le \frac{d_m q}{d_m-sq}.
\]
The conclusion now follows from Lemma \ref{lem3.4}.
\end{proof}

We can now globalize the local estimates.

\begin{lemma}\label{lem3.9}
The following continuous embeddings hold:
\begin{enumerate}
\item[(i)] if \(n-k<sq\), then
\[
W_G^{s,q}(M)\hookrightarrow L^p(M)
\qquad \text{for every } p\ge 1;
\]
\item[(ii)] if \(n-k>sq\), then
\[
W_G^{s,q}(M)\hookrightarrow L^p(M)
\qquad \text{for every }
1\le p\le \frac{(n-k)q}{n-k-sq}.
\]
\end{enumerate}
\end{lemma}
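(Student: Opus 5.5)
We prove the two assertions first for $u\in C_G^\infty(M)$ and then pass to all of $W_G^{s,q}(M)$ using the density result of Lemma~\ref{lem2.2}.

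\emph{Local estimates.} Fix the finite covering $M=\bigcup_{m=1}^N\Omega_m$ constructed after Lemma~\ref{lem3.2}. Since $k_m\ge k$, we have $d_m=n-k_m\le n-k$ for every $m$.

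In case (i), $n-k<sq$ forces $d_m<sq$ for every $m$. Lemma~\ref{lem3.7} then gives
\[
\|u\|_{L^p(\Omega_m)}\le C_m(p)\bigl(\|u\|_{L^q(\Omega_m)}+[u]_{\Omega_m}\bigr)
\qquad\text{for every } p\ge 1,
\]
where $[u]_{\Omega_m}$ denotes the local energy $\bigl(\iint_{\Omega_m\times\Omega_m}|u(x)-u(y)|^qK_q^s\,d\mu\,d\mu\bigr)^{1/q}$.

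In case (ii), set $p^*=(n-k)q/(n-k-sq)$. Each chart falls into one of three situations.
\begin{enumerate}
\item[(a)] If $d_m<sq$, Lemma~\ref{lem3.7} gives the bound for every $p$, in particular for $p\le p^*$.
\item[(b)] If $d_m>sq$, the map $d\mapsto dq/(d-sq)$ is decreasing on $(sq,\infty)$, so $d_m\le n-k$ gives $d_mq/(d_m-sq)\ge p^*$. Lemma~\ref{lem3.8} then covers all $p\le p^*$.
\item[(c)] If $d_m=sq$, Lemmas~\ref{lem3.4} and~\ref{lem3.6} applied on $V_m$ give the estimate for every finite $p$.
\end{enumerate}
Case (c) needs $V_m$ to be a Lipschitz domain, which is guaranteed by property (ii) of the covering.

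\emph{Globalization.} For $p\ge 1$ we write
\[
\|u\|_{L^p(M)}\le\sum_{m=1}^N\|u\|_{L^p(\Omega_m)}.
\]
We then use $\|u\|_{L^q(\Omega_m)}\le\|u\|_{L^q(M)}$ and $[u]_{\Omega_m}\le[u]_{W^{s,q}(M)}$; the latter holds because the integrand is nonnegative and $\Omega_m\times\Omega_m\subset M\times M$. This yields
\[
\|u\|_{L^p(M)}\le C\bigl(\|u\|_{L^q(M)}+[u]_{W^{s,q}(M)}\bigr)=C\|u\|_{W^{s,q}(M)}
\]
for all $u\in C_G^\infty(M)$, with $C=\sum_m C_m(p)$.

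\emph{Density.} Given $u\in W_G^{s,q}(M)$, Lemma~\ref{lem2.2} provides $u_j\in C_G^\infty(M)$ with $u_j\to u$ in $W^{s,q}(M)$. The inequality shows that $(u_j)$ is Cauchy in $L^p(M)$. Its $L^p$-limit coincides a.e.\ with $u$, since $u_j\to u$ in $L^q$ and a subsequence converges a.e. So the inequality extends to $u$, which proves both continuous embeddings.

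\emph{Main obstacle.} The delicate point is the justification of Lemma~\ref{lem3.4} uniformly on each chart, together with the bookkeeping in case (ii) when the orbit dimension varies across charts. The monotonicity argument in (b) and the borderline case (c) handle the latter. I would check that the constants from Lemma~\ref{lem3.4} depend only on the chart and not on $u$, which is clear from its proof.
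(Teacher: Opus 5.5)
Your proposal is correct and follows the paper's proof. Like the paper, you reduce to $C_G^\infty(M)$ via Lemma~\ref{lem2.2}, then apply Lemma~\ref{lem3.7}, \ref{lem3.6} or \ref{lem3.8} on each chart according to whether $d_m<sq$, $d_m=sq$ or $d_m>sq$, using that $t\mapsto tq/(t-sq)$ is decreasing and $d_m\le n-k$. You then bound the local $L^q$ norms and energies by the global ones and sum over the finite cover. Your explicit Cauchy-sequence argument in $L^p(M)$ spells out what the paper leaves as ``by density.''
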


\begin{proof}
By Lemma \ref{lem2.2}, \(C_G^\infty(M)\) is dense in \(W_G^{s,q}(M)\).
It is therefore enough to prove the estimate for \(u\in C_G^\infty(M)\).
Since the covering is finite,
\[
\|u\|_{L^p(M)}
\le
\sum_{m=1}^N \|u\|_{L^p(\Omega_m)}.
\]
Moreover,
\[
\|u\|_{L^q(\Omega_m)}\le \|u\|_{L^q(M)},
\]
and
\[
\iint_{\Omega_m\times\Omega_m}
|u(x)-u(y)|^q K_q^s(x,y)\,d\mu(x)\,d\mu(y)
\le
\iint_{M\times M}
|u(x)-u(y)|^q K_q^s(x,y)\,d\mu(x)\,d\mu(y).
\]

If \(n-k<sq\), then \(d_m=n-k_m\le n-k<sq\) for every \(m\), so Lemma \ref{lem3.7} applies to each chart.
Summing the local estimates, we obtain
\[
\|u\|_{L^p(M)}
\le
C\Bigl(
\|u\|_{L^q(M)}
+
[u]_{W^{s,q}(M)}
\Bigr)
\]
for every \(p\ge 1\).

Assume now that \(n-k>sq\), and let
\[
1\le p\le \frac{(n-k)q}{n-k-sq}.
\]
If \(d_m<sq\), then Lemma~\ref{lem3.7} applies.
If \(d_m=sq\), then Lemma~\ref{lem3.6} applies.
If \(d_m>sq\), then Lemma~\ref{lem3.8} applies.
Moreover, since \(d_m\le n-k\) and the function
\[
t\mapsto \frac{tq}{t-sq}
\]
is decreasing on \((sq,\infty)\), one has
\[
p\le \frac{(n-k)q}{n-k-sq}\le \frac{d_m q}{d_m-sq}
\qquad \text{whenever } d_m>sq.
\]
Thus every chart yields the required \(L^p\) bound, and summing once again over \(m\) gives the global estimate.
By density, the conclusion extends to all \(u\in W_G^{s,q}(M)\).
\end{proof}

We next prove compactness.

\begin{lemma}\label{lem3.10}
Assume that \(n-k<sq\), and set
\[
\alpha=s-\frac{n-k}{q}>0.
\]
Then
\[
W_G^{s,q}(M)\hookrightarrow C^{0,\alpha}(M)
\]
continuously.
\end{lemma}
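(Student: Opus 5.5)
The plan is to localize with the finite covering $M=\bigcup_{m=1}^N\Omega_m$ from Lemma~\ref{lem3.2}, apply the Euclidean Morrey embedding of Lemma~\ref{lem3.5}(ii) to each reduced function $\widetilde u_m$, and then patch the local estimates together. By Lemma~\ref{lem2.2} it suffices to prove
\[
\|u\|_{C^{0,\alpha}(M)}\le C\|u\|_{W^{s,q}(M)}
\]
for $u\in C_G^\infty(M)$. Density then gives the general case: a Cauchy sequence in $W_G^{s,q}$ is Cauchy in $C^{0,\alpha}$. Its uniform limit coincides a.e.\ with the $L^q$ limit, so every $u\in W_G^{s,q}(M)$ has a Hölder representative satisfying the same bound.

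\textbf{Local step.} In each chart we have $d_m=n-k_m\le n-k<sq$. Morrey in dimension $d_m$ gives $\widetilde u_m\in C^{0,\alpha_m}(\overline{V_m})$ with $\alpha_m=s-d_m/q\ge\alpha$. Since $V_m$ is bounded, $C^{0,\alpha_m}(\overline{V_m})\hookrightarrow C^{0,\alpha}(\overline{V_m})$. Lemma~\ref{lem3.4} bounds $\|\widetilde u_m\|_{W^{s,q}(V_m)}$ by $C\|u\|_{W^{s,q}(M)}$. By \eqref{eq3.1}, $u\circ\varphi_m^{-1}(x,z)=\widetilde u_m(z)$, so for $P=\varphi_m^{-1}(x,z)$ and $Q=\varphi_m^{-1}(y,w)$ we get
\[
|u(P)-u(Q)|\le C|z-w|^\alpha\le C\,d_g(P,Q)^\alpha\,\|u\|_{W^{s,q}(M)},
\]
using the bi-Lipschitz property (iv). The same argument bounds $\sup_{\Omega_m}|u|$.

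\textbf{Globalization.} This is the step needing care, because Hölder seminorms do not simply add over a cover. I would shrink the cover: choose compact sets $F_m\subset\Omega_m$ that still cover $M$, and let $\delta>0$ be a Lebesgue number for the cover $\{\Omega_m\}$. If $d_g(P,Q)<\delta$, both points lie in a common $\Omega_m$ and the local estimate applies. One caveat: this requires the bi-Lipschitz comparison of (iv) on all of $\Omega_m$, not just along geodesics inside it. This holds since $d_g$ and the chart distance are comparable on $\overline{\Omega_m}$, which is implicitly assumed in Lemma~\ref{lem3.4}. If $d_g(P,Q)\ge\delta$, then
\[
|u(P)-u(Q)|\le 2\sup_M|u|\le 2\delta^{-\alpha}\sup_M|u|\,d_g(P,Q)^\alpha,
\]
and the sup norm is already controlled by summing the local bounds. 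Together these give the global estimate.

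\textbf{Main obstacle.} I expect the hardest point to be justifying that Lemma~\ref{lem3.4} legitimately transfers the full $W^{s,q}(V_m)$ norm, including the $L^q$ part, onto $V_m$ with Lipschitz boundary, so that Lemma~\ref{lem3.5}(ii) applies. I would fix this by arranging in the construction that each $V_m$ is a ball, which is allowed by property (ii).
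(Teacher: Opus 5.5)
Your proposal is correct and follows the paper's proof. Both apply the Morrey embedding of Lemma~\ref{lem3.5}(ii) to $\widetilde u_m$ chart by chart, in dimension $d_m\le n-k$, transfer the bounds through Lemma~\ref{lem3.4}, patch over the finite cover, and extend by density. Your Lebesgue-number argument for the patching and your identification of the Hölder representative of the limit make explicit what the paper compresses into ``since the covering is finite'' and ``by density.''
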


\begin{proof}
Fix \(u\in C_G^\infty(M)\).
For every \(m\), since \(d_m\le n-k<sq\), Lemma \ref{lem3.5}(ii) gives
\[
\|\widetilde u_m\|_{C^{0,\alpha_m}(\overline{V_m})}
\le
C_m \|\widetilde u_m\|_{W^{s,q}(V_m)},
\qquad
\alpha_m=s-\frac{d_m}{q}\ge \alpha.
\]
Because \(V_m\) is bounded, the \(C^{0,\alpha_m}\)-norm controls the \(C^{0,\alpha}\)-norm.
Combining this with Lemma \ref{lem3.4}, we obtain
\[
\|u\|_{C^{0,\alpha}(\Omega_m)}
\le
C_m\Bigl(
\|u\|_{L^q(\Omega_m)}
+
[u]_{W^{s,q}(M)}
\Bigr).
\]
Since the covering is finite, these local estimates imply
\[
\|u\|_{C^{0,\alpha}(M)}
\le
C\Bigl(
\|u\|_{L^q(M)}
+
[u]_{W^{s,q}(M)}
\Bigr).
\]
By density, the estimate extends to \(W_G^{s,q}(M)\).
\end{proof}

\begin{lemma}\label{lem3.11}
The following compact embeddings hold:
\begin{enumerate}
\item[(i)] if \(n-k<sq\), then
\[
W_G^{s,q}(M)\hookrightarrow L^p(M)
\qquad \text{compactly for every } p\ge 1;
\]
\item[(ii)] if \(n-k>sq\), then
\[
W_G^{s,q}(M)\hookrightarrow L^p(M)
\qquad \text{compactly for every }
1\le p<\frac{(n-k)q}{n-k-sq}.
\]
\end{enumerate}
\end{lemma}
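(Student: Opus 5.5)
For (i), I will combine Lemma~\ref{lem3.10} with Arzel\`a--Ascoli. Let \((u_j)\) be bounded in \(W_G^{s,q}(M)\). By Lemma~\ref{lem3.10}, \((u_j)\) is bounded in \(C^{0,\alpha}(M)\) with \(\alpha=s-\frac{n-k}{q}>0\), hence uniformly bounded and equicontinuous on the compact metric space \((M,d_g)\). Arzel\`a--Ascoli then gives a subsequence converging uniformly on \(M\). Since \(\Vol(M)<\infty\), uniform convergence implies convergence in every \(L^p(M)\), \(p\ge1\). Strictly, Lemma~\ref{lem3.10} was proved for smooth invariant functions and extended by density; I will note that the Hölder bound passes to \(L^q\)-limits (a.e.\ limits of uniformly Hölder functions), so every element of \(W_G^{s,q}(M)\) has a Hölder representative. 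Thus the embedding is compact.

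For (ii), fix \(1\le p<\widetilde q_s^*\). I will first show compactness into \(L^q(M)\), equivalently into \(L^1(M)\), and then interpolate. For the first step I will use the unrestricted compact embedding \(W^{s,q}(M)\hookrightarrow L^q(M)\) from \cite{Tan2025SharpFS} (valid when \(sq<n\); if \(sq\ge n\) one can still get \(L^q\)-compactness, since \(W^{s,q}(M)\hookrightarrow W^{t,q}(M)\) for \(t<s\) with \(tq<n\) by the kernel comparison with \(d_g^{-(n+sq)}\), exactly as in Lemma~\ref{lem3.6}). Actually \(n-k>sq\) already forces \(sq<n\). A bounded sequence in \(W_G^{s,q}(M)\) therefore has a subsequence converging in \(L^q(M)\), and the limit is \(G\)-invariant. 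An alternative is to work chartwise: via Lemma~\ref{lem3.4}, the \(\widetilde u_{m,j}\) are bounded in \(W^{s,q}(V_m)\), so the Euclidean fractional Rellich theorem on \(V_m\) applies. This works too, but the global route is simpler.

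To reach exponent \(p\), I will use Lemma~\ref{lem3.9}(ii): the subsequence is bounded in \(L^{\widetilde q_s^*}(M)\). If \(p\le q\), convergence in \(L^p\) follows from convergence in \(L^q\) by Hölder on a finite measure space. If \(q<p<\widetilde q_s^*\), write \(\frac1p=\frac{\theta}{q}+\frac{1-\theta}{\widetilde q_s^*}\) with \(\theta\in(0,1]\). The interpolation inequality
\[
\|u_j-u_i\|_{L^p}\le \|u_j-u_i\|_{L^q}^{\theta}\,\|u_j-u_i\|_{L^{\widetilde q_s^*}}^{1-\theta}
\]
shows the subsequence is Cauchy in \(L^p(M)\).

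The main obstacle is minor and amounts to bookkeeping. One point is the justification in (i) that the Hölder estimate holds for all of \(W_G^{s,q}(M)\), not only for smooth functions. The other is making sure the \(L^q\)-compactness input is available, which the restriction \(sq<n\) together with the cited compact embedding handles.
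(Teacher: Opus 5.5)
Your part (i) is the paper's proof: Lemma~\ref{lem3.10}, then Arzel\`a--Ascoli, then $L^\infty$ control of every $L^p$ norm on a finite-volume space. Part (ii) is correct but takes a different route.

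\textbf{The paper's route for (ii).} It works chart by chart:
\begin{enumerate}
\item[(a)] It passes to the reduced functions $\widetilde u_{j,m}$ on $V_m$ via Lemma~\ref{lem3.4}.
\item[(b)] It applies the Euclidean fractional Rellich theorem in dimension $d_m$, with separate cases: $d_m>sq$, $d_m=sq$ (via Lemma~\ref{lem3.6}), and $d_m<sq$ (via the H\"older embedding).
\item[(c)] It uses $p<\widetilde q_s^*\le d_mq/(d_m-sq)$ to stay subcritical in each chart.
\item[(d)] It extracts a common subsequence over the finitely many charts and transfers back to $M$.
\end{enumerate}

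\textbf{Your route for (ii).} You combine two global facts. The first is the unrestricted compact embedding $W^{s,q}(M)\hookrightarrow L^q(M)$ from \cite{Tan2025SharpFS}, which applies since $sq<n-k<n$ and $q<q_s^*$. The second is the uniform $L^{\widetilde q_s^*}$ bound from Lemma~\ref{lem3.9}(ii). You then conclude by H\"older for $p\le q$ and by interpolation for $q<p<\widetilde q_s^*$.

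\textbf{What each buys.} Your argument is shorter and needs no chart-level case analysis. It also only applies results already established on all of $W_G^{s,q}(M)$. The paper, by contrast, tacitly applies Lemma~\ref{lem3.4}, which is proved only for $u\in C_G^\infty(M)$, to arbitrary $u_j\in W_G^{s,q}(M)$ without an approximation step. The trade-off is that your compactness depends on a continuous embedding at some exponent above $p$, here the endpoint estimate of Lemma~\ref{lem3.9}(ii). The paper's chartwise argument gets compactness directly from subcritical Euclidean Rellich and does not use the endpoint estimate at all.
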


\begin{proof}
We first prove (i).
Let \((u_j)\) be a bounded sequence in \(W_G^{s,q}(M)\).
By Lemma \ref{lem3.10}, \((u_j)\) is bounded in \(C^{0,\alpha}(M)\), hence uniformly bounded and equicontinuous on the compact manifold \(M\).
By the Arzel\`a--Ascoli theorem, there exist a subsequence, still denoted by \((u_j)\), and a function \(u\in C(M)\) such that
\[
u_j\to u \qquad \text{uniformly on } M.
\]
Therefore,
\[
\|u_j-u\|_{L^p(M)}
\le
\Vol(M)^{1/p}\|u_j-u\|_{L^\infty(M)}
\to 0.
\]
Thus the embedding into \(L^p(M)\) is compact for every \(p\ge 1\).

We now prove (ii).
Fix
\[
1\le p<\frac{(n-k)q}{n-k-sq},
\]
and let \((u_j)\) be a bounded sequence in \(W_G^{s,q}(M)\).
For each \(m\), let \(\widetilde u_{j,m}\) be the function on \(V_m\) defined by
\[
u_j\circ \varphi_m^{-1}(x,z)=\widetilde u_{j,m}(z).
\]
By Lemma \ref{lem3.4}, the sequence \((\widetilde u_{j,m})_j\) is bounded in \(W^{s,q}(V_m)\).

If \(d_m>sq\), then
\[
p<\frac{(n-k)q}{n-k-sq}\le \frac{d_m q}{d_m-sq},
\]
so the Euclidean compact embedding
\[
W^{s,q}(V_m)\hookrightarrow L^p(V_m)
\]
holds.
If \(d_m<sq\), then \(W^{s,q}(V_m)\) embeds continuously into \(C^{0,\beta}(\overline{V_m})\), and hence compactly into \(L^p(V_m)\).
If \(d_m=sq\), then Lemma~\ref{lem3.6} yields the compact embedding \(W^{s,q}(V_m)\hookrightarrow L^p(V_m)\) for every finite \(p\).
Therefore, for every \(m\), the sequence \((\widetilde u_{j,m})_j\) admits a subsequence converging in \(L^p(V_m)\).

Since there are only finitely many indices \(m\), a diagonal argument yields a single subsequence, still denoted by \((u_j)\), such that for every \(m\),
\[
\widetilde u_{j,m}\to \widetilde u_m
\qquad \text{in } L^p(V_m).
\]
Using the \(L^p\) comparison in Lemma \ref{lem3.4}, we obtain
\[
\|u_j-u_\ell\|_{L^p(\Omega_m)}
\le
C_m \|\widetilde u_{j,m}-\widetilde u_{\ell,m}\|_{L^p(V_m)}
\to 0
\qquad \text{as } j,\ell\to\infty.
\]
Summing over the finite covering, we get
\[
\|u_j-u_\ell\|_{L^p(M)}
\le
\sum_{m=1}^N \|u_j-u_\ell\|_{L^p(\Omega_m)}
\to 0.
\]
Hence \((u_j)\) is a Cauchy sequence in \(L^p(M)\), and therefore converges in \(L^p(M)\).
This proves the compactness.
\end{proof}

\begin{proof}[Proof of Theorem~\ref{thm1.1}]
Assertion (i) follows from Lemmas~\ref{lem3.9}(i) and \ref{lem3.11}(i), while assertion (ii) follows from Lemmas~\ref{lem3.9}(ii) and \ref{lem3.11}(ii).
\end{proof}

\section{Proof of Theorem~\ref{thm1.2}}

Throughout this section, we assume that
\[
n-k>sq,
\]
and we set
\[
\widetilde q_s^*=\frac{(n-k)q}{n-k-sq}.
\]
This is precisely the range in which the improved critical exponent is finite.

\begin{lemma}\label{lem4.1}
There exists a constant \(C>0\) such that
\[
\|u-u_M\|_{L^q(M)}
\le
C [u]_{W^{s,q}(M)}
\qquad
\text{for every } u\in W_G^{s,q}(M),
\]
where
\[
u_M=\frac{1}{\Vol(M)}\int_M u\,d\mu .
\]
\end{lemma}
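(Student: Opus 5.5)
This is a Poincaré inequality for the fractional seminorm. It actually holds on all of \(W^{s,q}(M)\), so the \(G\)-invariance plays no role. I would give a direct proof and, as a backup, sketch the compactness argument.

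\emph{Direct argument.} First use Remark 3.4 of \cite{Tan2025SharpFS} (already quoted in the proof of Lemma~\ref{lem3.4}), which gives
\[
K_q^s(x,y)\ge c_{M,s,q}\, d_g(x,y)^{-(n+sq)}.
\]
Since \(M\) is compact, \(d_g(x,y)\le \diam M\), so
\[
K_q^s(x,y)\ge c_{M,s,q}(\diam M)^{-(n+sq)}=:c_0>0
\]
for all \(x\ne y\). Next, write
\[
u(x)-u_M=\frac{1}{\Vol(M)}\int_M (u(x)-u(y))\,d\mu(y).
\]
By Jensen's (or H\"older's) inequality with respect to the normalized measure,
\[
|u(x)-u_M|^q\le \frac{1}{\Vol(M)}\int_M |u(x)-u(y)|^q\,d\mu(y).
\]
Integrating in \(x\) and using the lower bound on the kernel gives
\[
\|u-u_M\|_{L^q(M)}^q\le \frac{1}{\Vol(M)}\iint_{M\times M}|u(x)-u(y)|^q\,d\mu\,d\mu\le \frac{1}{c_0\Vol(M)}[u]_{W^{s,q}(M)}^q .
\]
This yields the lemma with \(C=(c_0\Vol(M))^{-1/q}\), for every \(u\in W^{s,q}(M)\), and in particular for every \(u\in W_G^{s,q}(M)\). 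All integrals are finite because \(u\in L^q(M)\) and \([u]_{W^{s,q}(M)}<\infty\), so no density step is needed.

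\emph{Backup argument.} One could instead argue by contradiction. Normalize a sequence with \(\|u_j-(u_j)_M\|_{L^q}=1\) and \([u_j]\to0\). Use the compact embedding of Theorem~\ref{thm1.1} to extract a limit in \(L^q\); the limit has zero seminorm by Fatou, hence is constant, hence is zero since it has zero mean, contradicting the normalization. This route is unnecessary because the kernel is bounded below.

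The only point needing care is the uniform positive lower bound on \(K_q^s\) away from the diagonal, i.e.\ that \(K_q^s\) does not degenerate at large distances. That follows from the two-sided comparison with \(d_g^{-(n+sq)}\) together with the finiteness of \(\diam M\). Alternatively, one can see it directly: the heat kernel is strictly positive, and \(K_M(t,x,y)\to\Vol(M)^{-1}\) as \(t\to\infty\), so the \(t\)-integral is bounded below uniformly in \(x,y\).
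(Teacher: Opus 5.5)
Your direct argument is correct, and it takes a genuinely different route from the paper. The paper argues by contradiction. It takes $u_j\in W_G^{s,q}(M)$ with $(u_j)_M=0$, $\|u_j\|_{L^q(M)}=1$ and $[u_j]_{W^{s,q}(M)}\to0$. It then extracts an $L^q$-convergent subsequence using the compact embedding of Lemma~\ref{lem3.11}. Fatou's lemma shows the limit has zero seminorm, hence is a constant with zero mean, hence vanishes. This is essentially your backup sketch; there you should normalize so that $(u_j)_M=0$, since otherwise the sequence need not be bounded in $W^{s,q}(M)$.

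Your proof instead uses the fact that the nonlocal kernel couples every pair of points with a uniform weight: $K_q^s\ge c_0>0$ off the diagonal. After that, Jensen gives the inequality in one line with the explicit constant $C=(c_0\Vol(M))^{-1/q}$.

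What your route buys:
\begin{itemize}
\item an explicit constant;
\item validity on all of $W^{s,q}(M)$, with no $G$-invariance and no use of $n-k>sq$;
\item independence from all of Section~3, which shows the compactness step is unnecessary here.
\end{itemize}

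What the paper's route buys: its compactness template still works when the energy has no such global coupling, for instance for the local $H_1^q$ Poincar\'e inequality. However, it gives no constant. Moreover, its final step (``zero seminorm implies constant'') already uses positivity of $K_q^s$ off the diagonal, which is the qualitative version of your quantitative bound.

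Both arguments tacitly require $M$ to be connected. You need it through $\diam M<\infty$, or through $K_M(t,x,y)\to\Vol(M)^{-1}$ in your alternative justification. The paper needs it in the ``constant'' step. The paper assumes connectedness implicitly throughout.
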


\begin{proof}
We argue by contradiction.
Suppose that the conclusion is false.
Then there exists a sequence \((u_j)\subset W_G^{s,q}(M)\) such that
\[
(u_j)_M=0,
\qquad
\|u_j\|_{L^q(M)}=1,
\qquad
[u_j]_{W^{s,q}(M)}\to 0
\quad \text{as } j\to\infty.
\]
By Lemma~\ref{lem3.11}, since
\[
q<\widetilde q_s^*=\frac{(n-k)q}{n-k-sq},
\]
the embedding
\[
W_G^{s,q}(M)\hookrightarrow L^q(M)
\]
is compact.
Hence, up to a subsequence,
\[
u_j\to u
\qquad
\text{strongly in } L^q(M),
\]
for some \(u\in L^q(M)\).
In particular,
\[
\|u\|_{L^q(M)}=1
\qquad \text{and} \qquad
u_M=0.
\]

Passing to a further subsequence if necessary, we may also assume that
\[
u_j(x)\to u(x)
\qquad
\text{for a.e. } x\in M.
\]
Therefore,
\[
u_j(x)-u_j(y)\to u(x)-u(y)
\qquad
\text{for a.e. } (x,y)\in M\times M.
\]
By Fatou's lemma,
\[
[u]_{W^{s,q}(M)}^q
=
\iint_{M\times M}|u(x)-u(y)|^q K_q^s(x,y)\,d\mu(x)\,d\mu(y)
\le
\liminf_{j\to\infty}[u_j]_{W^{s,q}(M)}^q
=
0.
\]
Hence \(u(x)=u(y)\) for a.e. \((x,y)\in M\times M\), so \(u\) is almost everywhere equal to a constant.
Since \(u_M=0\), this constant must be \(0\).
Thus \(u=0\) a.e. on \(M\), which contradicts \(\|u\|_{L^q(M)}=1\).
The proof is complete.
\end{proof}

\begin{lemma}\label{lem4.2}
There exists a constant \(A_0>0\) such that
\[
\|u-u_M\|_{L^{\widetilde q_s^*}(M)}
\le
A_0 [u]_{W^{s,q}(M)}
\qquad
\text{for every } u\in W_G^{s,q}(M).
\]
\end{lemma}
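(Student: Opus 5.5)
The plan is to combine the continuous improved embedding of Lemma~\ref{lem3.9}(ii) with the Poincaré-type inequality of Lemma~\ref{lem4.1}. Fix \(u\in W_G^{s,q}(M)\) and set \(v=u-u_M\).

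First I check that \(v\in W_G^{s,q}(M)\). The constant \(u_M\) is \(G\)-invariant and lies in \(L^q(M)\) since \(M\) is compact. Moreover, \(v(x)-v(y)=u(x)-u(y)\), so
\[
[v]_{W^{s,q}(M)}=[u]_{W^{s,q}(M)}.
\]

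Next I apply Lemma~\ref{lem3.9}(ii) with the endpoint exponent \(p=\widetilde q_s^*\). Its proof gives a constant \(C_1>0\), independent of \(u\), such that
\[
\|v\|_{L^{\widetilde q_s^*}(M)}\le C_1\bigl(\|v\|_{L^q(M)}+[v]_{W^{s,q}(M)}\bigr).
\]
Lemma~\ref{lem4.1} gives \(\|v\|_{L^q(M)}=\|u-u_M\|_{L^q(M)}\le C[u]_{W^{s,q}(M)}\). Substituting this and the seminorm identity, we obtain
\[
\|u-u_M\|_{L^{\widetilde q_s^*}(M)}\le C_1(C+1)[u]_{W^{s,q}(M)}.
\]
The statement then holds with \(A_0=C_1(C+1)\).

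The only point needing care is that Lemma~\ref{lem3.9}(ii) really yields a quantitative inequality of the form \(\|w\|_{L^p}\le C(\|w\|_{L^q}+[w]_{W^{s,q}})\), rather than a bare embedding statement. Its proof establishes exactly this for \(C_G^\infty(M)\) and extends it by density (Lemma~\ref{lem2.2}).

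Alternatively, the inequality follows abstractly: the embedding \(W_G^{s,q}(M)\hookrightarrow L^{\widetilde q_s^*}(M)\) is a bounded linear map between Banach spaces, and \(W_G^{s,q}(M)\) is a closed subspace of \(W^{s,q}(M)\), hence Banach. There is therefore no real obstacle beyond invoking these two earlier lemmas.
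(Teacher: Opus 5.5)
Your proof is correct and follows the paper's argument essentially line for line: set \(v=u-u_M\), use \([v]_{W^{s,q}(M)}=[u]_{W^{s,q}(M)}\), apply the continuous embedding of Lemma~\ref{lem3.9}(ii) at the endpoint \(p=\widetilde q_s^*\), and absorb \(\|v\|_{L^q(M)}\) via Lemma~\ref{lem4.1} to get \(A_0=C_1(C+1)\). Your remark that the embedding must be used in its quantitative form (which boundedness of the inclusion supplies) is the same step the paper uses implicitly.
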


\begin{proof}
Set
\[
v=u-u_M.
\]
Then \(v\in W_G^{s,q}(M)\), \(v_M=0\), and
\[
[v]_{W^{s,q}(M)}=[u]_{W^{s,q}(M)}.
\]
By Lemma~\ref{lem3.9}, the continuous embedding
\[
W_G^{s,q}(M)\hookrightarrow L^{\widetilde q_s^*}(M)
\]
holds.
Therefore, there exists \(C_0>0\) such that
\[
\|v\|_{L^{\widetilde q_s^*}(M)}
\le
C_0\Bigl(\|v\|_{L^q(M)}+[v]_{W^{s,q}(M)}\Bigr).
\]
Applying Lemma~\ref{lem4.1}, we obtain
\[
\|v\|_{L^{\widetilde q_s^*}(M)}
\le
C_0(C+1)[u]_{W^{s,q}(M)}.
\]
This proves the lemma.
\end{proof}

\begin{proof}[Proof of Theorem~\ref{thm1.2}]
We first prove the lower bound
\[
\beta_{q,G}(M)\ge \Vol(M)^{-s/(n-k)}.
\]
Indeed, assume that
\[
\|u\|_{L^{\widetilde q_s^*}(M)}
\le
A [u]_{W^{s,q}(M)}
+
B\|u\|_{L^q(M)}
\]
holds for all \(u\in W_G^{s,q}(M)\).
Testing this inequality with the constant function \(u\equiv 1\), for which \([u]_{W^{s,q}(M)}=0\), we get
\[
\Vol(M)^{1/\widetilde q_s^*}
\le
B\,\Vol(M)^{1/q}.
\]
Hence
\[
B\ge \Vol(M)^{1/\widetilde q_s^*-1/q}
=
\Vol(M)^{-s/(n-k)}.
\]
It follows that
\[
\beta_{q,G}(M)\ge \Vol(M)^{-s/(n-k)}.
\]

We now prove the reverse inequality.
By Lemma~\ref{lem4.2}, for every \(u\in W_G^{s,q}(M)\),
\[
\|u-u_M\|_{L^{\widetilde q_s^*}(M)}
\le
A_0 [u]_{W^{s,q}(M)}.
\]
Therefore, by the triangle inequality,
\begin{equation}\label{eq4.1}
\|u\|_{L^{\widetilde q_s^*}(M)}
\le
A_0 [u]_{W^{s,q}(M)}
+
\|u_M\|_{L^{\widetilde q_s^*}(M)}.
\end{equation}
Since \(u_M\) is constant, we have
\[
\|u_M\|_{L^{\widetilde q_s^*}(M)}
=
\Vol(M)^{1/\widetilde q_s^*}\,|u_M|.
\]
Moreover, Hölder's inequality gives
\[
|u_M|
=
\frac{1}{\Vol(M)}\left|\int_M u\,d\mu\right|
\le
\Vol(M)^{-1/q}\|u\|_{L^q(M)}.
\]
Consequently,
\[
\|u_M\|_{L^{\widetilde q_s^*}(M)}
\le
\Vol(M)^{1/\widetilde q_s^*-1/q}\|u\|_{L^q(M)}
=
\Vol(M)^{-s/(n-k)}\|u\|_{L^q(M)}.
\]
Substituting this estimate into \eqref{eq4.1}, we obtain
\[
\|u\|_{L^{\widetilde q_s^*}(M)}
\le
A_0 [u]_{W^{s,q}(M)}
+
\Vol(M)^{-s/(n-k)}\|u\|_{L^q(M)}.
\]
Hence
\[
\beta_{q,G}(M)\le \Vol(M)^{-s/(n-k)}.
\]
Combining the two bounds, we conclude that
\[
\beta_{q,G}(M)=\Vol(M)^{-s/(n-k)}.
\]
At the same time, the preceding estimate shows that there exists \(A\in \R\) such that
\[
\|u\|_{L^{\widetilde q_s^*}(M)}
\le
A [u]_{W^{s,q}(M)}
+
\Vol(M)^{-s/(n-k)}\|u\|_{L^q(M)}
\]
for all \(u\in W_G^{s,q}(M)\).
This completes the proof.
\end{proof}

\section{Proof of Theorem~\ref{thm1.4}}

Throughout this section, we assume that
\[
s\in(0,1),\qquad q\in[1,\infty),\qquad sq<n,
\]
and we set
\[
q_s^*=\frac{nq}{n-sq}.
\]

The next lemma provides the key local estimate near infinite orbits.
It shows that, on a neighborhood where the connected component \(G_0\) of the identity has only infinite orbits, the Sobolev coefficient can be made arbitrarily small.

\begin{lemma}\label{lem5.1}
Let \((M,g)\) be a closed Riemannian \(n\)-manifold, let \(G\subset \Isom_g(M)\) be compact, and let \(G_0\) be the identity component of \(G\).
Let \(O\subset M\) be a compact set such that
\begin{enumerate}
\item \(O\) is invariant under the action of \(G_0\);
\item for every \(x\in O\), the orbit \(O_{G_0}^x\) is infinite.
\end{enumerate}
Then there exists \(\delta_0>0\) such that, for every \(\varepsilon>0\), there exists \(B_\varepsilon>0\) with the following property: for every \(u\in C_G^\infty(M)\) satisfying
\[
\supp u\subset O_{\delta_0}
=
\{y\in M:d_g(y,O)<\delta_0\},
\]
one has
\begin{equation}\label{eq5.1}
\|u\|_{L^{q_s^*}(M)}^q
\le
\varepsilon [u]_{W^{s,q}(M)}^q
+
B_\varepsilon \|u\|_{L^q(M)}^q.
\end{equation}
\end{lemma}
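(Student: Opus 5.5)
The plan is to exploit the fact that, near $O$, $G$-invariant functions are in particular $G_0$-invariant, and $G_0$-orbits through points of a neighborhood of $O$ have dimension at least one. This gives an \emph{improved} subcritical exponent locally, so that $q_s^*$ becomes strictly subcritical for the local embedding. Smallness of the constant then comes from a standard interpolation/compactness argument.

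\textbf{Step 1: choice of $\delta_0$ and a finite cover.} For each $x\in O$, the orbit $O_{G_0}^x$ is infinite, so $\dim O_{G_0}^x\ge 1$ by Proposition~\ref{pro3.1}. Lemma~\ref{lem3.2}, applied to the compact group $G_0$, gives a chart $(\Omega_x,\varphi_x)$ with $\varphi_x(\Omega_x)=U_x\times V_x$, $U_x\subset\R^{k_x}$, $k_x\ge1$, in which $G_0$-invariant functions depend only on the $V_x$-variable. By compactness of $O$, finitely many charts $\Omega_1,\dots,\Omega_N$ (shrunk so that properties (ii) and (iv) of Section~3 hold) cover $O$. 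I then take $\delta_0>0$ so small that $O_{\delta_0}$ is contained in $\bigcup_m\Omega_m$, using a Lebesgue number argument. Since $O$ is $G_0$-invariant and $G_0$ acts by isometries, $O_{\delta_0}$ is also $G_0$-invariant; this is not strictly needed but keeps the setting clean.

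\textbf{Step 2: a local improved embedding with a strictly larger exponent.} Let $u\in C_G^\infty(M)$ with $\supp u\subset O_{\delta_0}$. In each chart we have $d_m=n-k_m\le n-1$. Lemma~\ref{lem3.4}, whose proof only uses the product structure of $G_0$-invariant functions, together with Lemmas~\ref{lem3.5}--\ref{lem3.8}, gives
\[
\|u\|_{L^{r}(\Omega_m)}\le C\bigl(\|u\|_{L^q(M)}+[u]_{W^{s,q}(M)}\bigr)
\]
for some exponent $r>q_s^*$. Concretely, one takes $r=\frac{(n-1)q}{n-1-sq}$ if $n-1>sq$, and any finite $r>q_s^*$ otherwise; this works because $\frac{d_mq}{d_m-sq}\ge\frac{(n-1)q}{n-1-sq}>\frac{nq}{n-sq}$. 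Summing over $m$ gives $\|u\|_{L^r(M)}\le C_1(\|u\|_{L^q}+[u]_{W^{s,q}})$ for all such $u$.

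\textbf{Step 3: interpolation and Young's inequality.} Since $q<q_s^*<r$, Hölder's inequality gives $\|u\|_{q_s^*}\le\|u\|_q^{1-\theta}\|u\|_r^{\theta}$ with $\theta\in(0,1)$. Inserting Step 2 yields
\[
\|u\|_{q_s^*}\le C\bigl(\|u\|_q+\|u\|_q^{1-\theta}[u]^{\theta}\bigr).
\]
Raising to the power $q$ and applying Young's inequality with exponents $1/\theta$ and $1/(1-\theta)$ then gives $\|u\|_{q_s^*}^q\le\varepsilon[u]^q+B_\varepsilon\|u\|_q^q$, which is~\eqref{eq5.1}.

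\textbf{Main obstacle.} The main difficulty is Step 2: verifying that the chart comparison of Lemma~\ref{lem3.4} and the normal form of Lemma~\ref{lem3.2} genuinely apply with $G_0$ in place of $G$, uniformly on a whole neighborhood $O_{\delta_0}$ rather than just at points of $O$. The point is that the local normal form holds on the entire chart domain, so every point of $\Omega_m$ has a $G_0$-orbit containing a $k_m$-dimensional slice. I would handle this by choosing the charts first and only then defining $\delta_0$ via the cover. A secondary point is that the energy on $\Omega_m\times\Omega_m$ is bounded by the global seminorm, which is immediate because the kernel is nonnegative.
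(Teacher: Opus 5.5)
Your argument is correct. Steps 1 and 2 match the paper:
\begin{itemize}
\item you take charts from Lemma~\ref{lem3.2} for $G_0$ covering $O$;
\item you choose $\delta_0$ so that $O_{\delta_0}$ lies in their union;
\item you use Lemma~\ref{lem3.4} with the Euclidean embeddings in dimension $d_m\le n-1$ to get an $L^r$ bound with some $r>q_s^*$.
\end{itemize}

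You diverge in how you turn this into an arbitrarily small coefficient in front of $[u]_{W^{s,q}(M)}^q$. The paper passes to the space $X_O=\{u\in W^{s,q}_{G_0}(M):\supp u\subset O_{\delta_0}\}$ and asserts that $X_O\hookrightarrow L^{q_s^*}(M)$ is compact. It then applies Ehrling's lemma, bounding $\|u\|_{L^{q_s^*}(M)}$ by an arbitrarily small multiple of $\|u\|_{W^{s,q}(M)}$ plus a multiple of $\|u\|_{L^q(M)}$, and raises to the power $q$. You instead interpolate $L^{q_s^*}$ between $L^q$ and $L^r$ by H\"older and absorb the mixed term with Young's inequality.

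Your route is more elementary and quantitative:
\begin{itemize}
\item It needs no compactness. The paper's ``therefore'' from the $L^r$ bound to compactness in $L^{q_s^*}$ tacitly combines compactness into $L^q(M)$ with the very same interpolation.
\item It works directly with the smooth $G$-invariant functions the statement is about. You never need to extend the chart estimates to nonsmooth elements of $X_O$, which would require a density argument preserving the support condition.
\item It gives the explicit growth $B_\varepsilon=O\bigl(\varepsilon^{-\theta/(1-\theta)}\bigr)$ as $\varepsilon\to0$, where $\theta$ is your interpolation exponent.
\end{itemize}

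The Ehrling argument is softer: it would still work if only a compact embedding were available, rather than one into a strictly higher exponent. That extra generality is not needed here.
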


\begin{proof}
Since all \(G_0\)-orbits in \(O\) are infinite, Proposition~\ref{pro3.1} implies that
\[
k_O=\min_{x\in O}\dim O_{G_0}^x \ge 1.
\]
Choose \(\delta_0>0\) so small that \(O_{\delta_0}\) is covered by finitely many charts of the type used in the proof of Theorem~\ref{thm1.1}, with respect to the group \(G_0\), and such that, in each of these charts, the orbit dimension is at least \(k_O\).

Let
\[
X_O=
\bigl\{
u\in W_{G_0}^{s,q}(M): \supp u\subset O_{\delta_0}
\bigr\}.
\]
Repeating the proof of Theorem~\ref{thm1.1} on this finite covering, one finds an exponent
\[
r>\frac{nq}{n-sq}=q_s^*
\]
such that
\[
X_O\hookrightarrow L^r(M)
\]
continuously, and therefore
\[
X_O\hookrightarrow L^{q_s^*}(M)
\]
compactly.

We now apply Ehrling's lemma to the compact embedding
\[
X_O\hookrightarrow L^{q_s^*}(M)
\]
and the continuous embedding
\[
L^{q_s^*}(M)\hookrightarrow L^q(M).
\]
Thus, for every \(\theta>0\), there exists \(C_\theta>0\) such that
\[
\|u\|_{L^{q_s^*}(M)}
\le
\theta \|u\|_{W^{s,q}(M)}
+
C_\theta \|u\|_{L^q(M)}
\qquad \text{for all } u\in X_O.
\]
Since
\[
\|u\|_{W^{s,q}(M)}
=
\|u\|_{L^q(M)}+[u]_{W^{s,q}(M)},
\]
we obtain
\[
\|u\|_{L^{q_s^*}(M)}
\le
\theta [u]_{W^{s,q}(M)}
+
(\theta+C_\theta)\|u\|_{L^q(M)}.
\]
Raising both sides to the power \(q\) and using
\[
(a+b)^q\le 2^{q-1}(a^q+b^q),
\]
we get
\[
\|u\|_{L^{q_s^*}(M)}^q
\le
2^{q-1}\theta^q [u]_{W^{s,q}(M)}^q
+
2^{q-1}(\theta+C_\theta)^q \|u\|_{L^q(M)}^q.
\]
Choosing \(\theta>0\) so small that \(2^{q-1}\theta^q\le \varepsilon\), we obtain \eqref{eq5.1}.
Since every \(u\in C_G^\infty(M)\) is, in particular, \(G_0\)-invariant, the proof is complete.
\end{proof}

We also need a localization estimate for multiplication by smooth cut-off functions.

\begin{lemma}\label{lem5.2}
Let \(\psi\in C^1(M)\) be nonnegative, and assume that \(\psi^{1/q}\in C^1(M)\).
Then, for every \(\tau\in(0,1)\), there exists \(C_{\psi,\tau}>0\) such that, for every \(u\in W^{s,q}(M)\),
\begin{equation}\label{eq5.2}
[\psi^{1/q}u]_{W^{s,q}(M)}^q
\le
(1+\tau)^{q-1}
\iint_{M\times M}\psi(x)|u(x)-u(y)|^q K_q^s(x,y)\,d\mu(x)\,d\mu(y)
+
C_{\psi,\tau}\|u\|_{L^q(M)}^q.
\end{equation}
\end{lemma}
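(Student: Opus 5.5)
Set \(\phi=\psi^{1/q}\), which by assumption is in \(C^1(M)\) and hence Lipschitz on \(M\): \(|\phi(x)-\phi(y)|\le L\,d_g(x,y)\). The plan is to split the difference quotient of \(\phi u\) as
\[
\phi(x)u(x)-\phi(y)u(y)=\phi(x)\bigl(u(x)-u(y)\bigr)+u(y)\bigl(\phi(x)-\phi(y)\bigr).
\]
Then I would apply the elementary convexity inequality
\[
|a+b|^q\le (1+\tau)^{q-1}|a|^q+\Bigl(1+\tfrac1\tau\Bigr)^{q-1}|b|^q,
\qquad a,b\in\R,\ \tau>0.
\]
This inequality follows from convexity of \(t\mapsto|t|^q\) by writing \(a+b=\frac{1}{1+\tau}\bigl((1+\tau)a\bigr)+\frac{\tau}{1+\tau}\bigl(\frac{1+\tau}{\tau}b\bigr)\); for \(q=1\) it is just the triangle inequality. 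With \(a=\phi(x)(u(x)-u(y))\) we get \(|a|^q=\psi(x)|u(x)-u(y)|^q\). Integrating against \(K_q^s(x,y)\,d\mu(x)\,d\mu(y)\) then produces exactly the first term on the right of \eqref{eq5.2}.

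It remains to bound the error term
\[
\Bigl(1+\tfrac1\tau\Bigr)^{q-1}\iint_{M\times M}|u(y)|^q|\phi(x)-\phi(y)|^q K_q^s(x,y)\,d\mu(x)\,d\mu(y)
\]
by a multiple of \(\|u\|_{L^q(M)}^q\). For this I will use the two-sided kernel bound from Remark 3.4 of \cite{Tan2025SharpFS}, already quoted in the proof of Lemma~\ref{lem3.4}, namely \(K_q^s(x,y)\le C_{M,s,q}\,d_g(x,y)^{-n-sq}\), together with the Lipschitz bound \(|\phi(x)-\phi(y)|\le \min\{L\,d_g(x,y),\,2\|\phi\|_\infty\}\). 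By Fubini, it then suffices to show that
\[
\sup_{y\in M}\int_M \min\bigl\{L^q d_g(x,y)^q,(2\|\phi\|_\infty)^q\bigr\}\,d_g(x,y)^{-n-sq}\,d\mu(x)<\infty.
\]

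This uniform integral bound is the only genuinely geometric step, and it is where I expect the main care to be needed. Since \(s<1\), near the diagonal the integrand is at most \(L^q d_g(x,y)^{q-n-sq}\). The exponent satisfies \(q-sq-n>-n\), so the integrand is integrable on small geodesic balls, uniformly in \(y\). This uses compactness of \(M\): there is a uniform \(r_0\) below the injectivity radius such that \(\mu(B_r(y))\le C r^n\) for all \(y\in M\) and \(r\le r_0\). Decomposing into dyadic annuli \(B_{2^{-j}r_0}(y)\setminus B_{2^{-j-1}r_0}(y)\) then gives a convergent geometric series \(\sum_j 2^{-j(1-s)q}\). Away from the diagonal, where \(d_g(x,y)\ge r_0\), the integrand is bounded by \((2\|\phi\|_\infty)^q r_0^{-n-sq}\), so that part of the integral is at most a constant times \(\Vol(M)\).

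Multiplying this supremum by \((1+1/\tau)^{q-1}C_{M,s,q}\) defines \(C_{\psi,\tau}\), and \eqref{eq5.2} follows for every \(u\in W^{s,q}(M)\). No density argument is needed, since all the estimates above are pointwise and then integrated. I will note that the right-hand side of \eqref{eq5.2} may be infinite only when \(u\notin W^{s,q}(M)\), which is excluded by hypothesis.
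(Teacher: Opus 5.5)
Your proposal is correct and follows essentially the same argument as the paper. Both use the splitting $\phi(x)(u(x)-u(y))+u(y)(\phi(x)-\phi(y))$, the convexity inequality with constants $(1+\tau)^{q-1}$ and $(1+1/\tau)^{q-1}$, and the Lipschitz bound combined with $K_q^s\le C_{M,s,q}\,d_g^{-n-sq}$ and Fubini to control the error term by $\|u\|_{L^q(M)}^q$. Your dyadic-annulus argument for the uniform integrability of $d_g(\cdot,y)^{-n+(1-s)q}$ just spells out what the paper states in one line; the truncation by $2\|\phi\|_\infty$ is harmless but not needed.
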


\begin{proof}
For \(x,y\in M\),
\[
\psi(x)^{1/q}u(x)-\psi(y)^{1/q}u(y)
=
\psi(x)^{1/q}\bigl(u(x)-u(y)\bigr)
+
\bigl(\psi(x)^{1/q}-\psi(y)^{1/q}\bigr)u(y).
\]
Hence, for every \(\tau\in(0,1)\),
\[
|\psi(x)^{1/q}u(x)-\psi(y)^{1/q}u(y)|^q
\le
(1+\tau)^{q-1}\psi(x)|u(x)-u(y)|^q
+
\Bigl(1+\frac{1}{\tau}\Bigr)^{q-1}
|\psi(x)^{1/q}-\psi(y)^{1/q}|^q |u(y)|^q.
\]
After integrating against \(K_q^s(x,y)\,d\mu(x)\,d\mu(y)\), it remains to estimate the second term.

Since \(\psi^{1/q}\in C^1(M)\), there exists \(L_\psi>0\) such that
\[
|\psi(x)^{1/q}-\psi(y)^{1/q}|
\le
L_\psi\, d_g(x,y)
\qquad \text{for all } x,y\in M.
\]
By Remark 3.4 in \cite{Tan2025SharpFS}, there exists a constant \(C_{M,s,q}>0\) such that
\[
K_q^s(x,y)\le \frac{C_{M,s,q}}{d_g(x,y)^{n+sq}},
\qquad x\ne y.
\]
Hence
\[
|\psi(x)^{1/q}-\psi(y)^{1/q}|^q K_q^s(x,y)
\le
C\, d_g(x,y)^{q}\cdot d_g(x,y)^{-n-sq}
=
C\, d_g(x,y)^{-n+(1-s)q}.
\]
Because \((1-s)q>0\) and \(M\) is compact, the function
\[
x\mapsto d_g(x,y)^{-n+(1-s)q}
\]
is integrable uniformly in \(y\).
Therefore,
\[
\sup_{y\in M}\int_M |\psi(x)^{1/q}-\psi(y)^{1/q}|^q K_q^s(x,y)\,d\mu(x)<\infty.
\]
The desired estimate now follows from Fubini's theorem.
\end{proof}

\begin{proof}[Proof of Theorem~\ref{thm1.4}]
Let \(G_0\) be the identity component of \(G\).

For each \(x\in M\), we construct a \(G\)-stable neighborhood \(U_x\) as follows.

If \(O_G^x\) is finite, write
\[
O_G^x=\{x_1,\dots,x_m\},
\qquad x_1=x,
\]
and choose \(\delta_x\in(0,\inj(M,g))\) so small that the geodesic balls
\[
B_{x_j}(2\delta_x),
\qquad j=1,\dots,m,
\]
are pairwise disjoint.
Set
\[
U_x=\bigcup_{j=1}^m B_{x_j}(\delta_x).
\]

Assume now that \(O_G^x\) is infinite.
By Proposition~\ref{pro3.1}, it is a compact submanifold of positive dimension.
Let
\[
O_G^x=O_1\cup\cdots\cup O_m
\]
be the decomposition into connected components, with \(x\in O_1\).
Since \(G_0\) is connected, each \(O_j\) is \(G_0\)-invariant.
Moreover, every \(G_0\)-orbit in \(O_1\cup\cdots\cup O_m\) is infinite.
Choose \(\delta_x>0\) so small that
\begin{enumerate}
\item \(\delta_x\) is smaller than the radius given by Lemma~\ref{lem5.1} with \(O=O_1\cup\cdots\cup O_m\);
\item the tubular neighborhoods
\[
B_j^{\delta_x}
=
\{y\in M:d_g(y,O_j)<\delta_x\},
\qquad j=1,\dots,m,
\]
are pairwise disjoint;
\item for each \(j\), the squared distance to \(O_j\) is smooth on \(B_j^{2\delta_x}\).
\end{enumerate}
Set
\[
U_x=\bigcup_{j=1}^m B_j^{\delta_x}.
\]

By compactness of \(M\), there exist finitely many points \(x_1,\dots,x_N\in M\) such that
\[
M=\bigcup_{i=1}^N U_{x_i}.
\]
For simplicity, write
\[
U_i=U_{x_i}.
\]

Let
\[
I_1=\{\,i\in\{1,\dots,N\}: \Card O_G^{x_i}<+\infty\,\},
\qquad
I_2=\{1,\dots,N\}\setminus I_1.
\]

For each \(i\in I_1\), write
\[
U_i=U_{i1}\cup\cdots\cup U_{im_i},
\]
where the \(U_{ij}\) are the pairwise disjoint balls centered at the points of the finite orbit \(O_G^{x_i}\), so that
\[
m_i=\Card O_G^{x_i}.
\]
Choose smooth nonnegative functions \(\alpha_{ij}\in C^\infty(M)\) such that
\[
\supp \alpha_{ij}\subset U_{ij},
\]
and such that, for each fixed \(i\), the family \(\{\alpha_{ij}\}_{j=1}^{m_i}\) is permuted by the action of \(G\).

For each \(i\in I_2\), choose a smooth nonnegative function \(\beta_i\in C^\infty(M)\) such that
\[
\supp \beta_i\subset U_i,
\]
and \(\beta_i\) is \(G\)-invariant.
This is possible because \(U_i\) is \(G\)-stable: one may start from any smooth cut-off supported in \(U_i\) and average it over \(G\).

We choose these functions so that
\[
\sum_{i\in I_1}\sum_{j=1}^{m_i}\alpha_{ij}
+
\sum_{i\in I_2}\beta_i
>0
\qquad \text{on } M.
\]
Set
\[
\Lambda
=
\sum_{i\in I_1}\sum_{j=1}^{m_i}\alpha_{ij}^{[q]+1}
+
\sum_{i\in I_2}\beta_i^{[q]+1},
\]
and define
\[
\eta_{ij}
=
\frac{\alpha_{ij}^{[q]+1}}{\Lambda},
\qquad i\in I_1,\ j=1,\dots,m_i,
\]
and
\[
\zeta_i
=
\frac{\beta_i^{[q]+1}}{\Lambda},
\qquad i\in I_2.
\]
Then
\[
\sum_{i\in I_1}\sum_{j=1}^{m_i}\eta_{ij}
+
\sum_{i\in I_2}\zeta_i
=1
\qquad \text{on } M,
\]
and all these functions are smooth and nonnegative.
Moreover,
\[
\eta_{ij}^{1/q}\in C^1(M),
\qquad
\zeta_i^{1/q}\in C^1(M).
\]
For each fixed \(i\in I_1\), the supports of \(\eta_{ij}\) and \(\eta_{ij'}\) are disjoint whenever \(j\ne j'\), and the family \(\{\eta_{ij}\}_{j=1}^{m_i}\) is still permuted by the action of \(G\).
In addition, each \(\zeta_i\) is \(G\)-invariant.

Now let \(u\in C_G^\infty(M)\).
Since
\[
|u|^q
=
\sum_{i\in I_1}\sum_{j=1}^{m_i}\eta_{ij}|u|^q
+
\sum_{i\in I_2}\zeta_i |u|^q,
\]
and \(q_s^*/q>1\), Minkowski's inequality yields
\begin{equation}\label{eq5.3}
\|u\|_{L^{q_s^*}(M)}^q
\le
\sum_{i\in I_1}
\left(
\int_M
\Bigl(\sum_{j=1}^{m_i}\eta_{ij}|u|^q\Bigr)^{q_s^*/q}\,d\mu
\right)^{q/q_s^*}
+
\sum_{i\in I_2}
\|\zeta_i^{1/q}u\|_{L^{q_s^*}(M)}^q.
\end{equation}

Fix \(i\in I_1\).
Since the supports of \(\eta_{ij}\) are pairwise disjoint,
\[
\Bigl(\sum_{j=1}^{m_i}\eta_{ij}|u|^q\Bigr)^{q_s^*/q}
=
\sum_{j=1}^{m_i} |\eta_{ij}^{1/q}u|^{q_s^*}.
\]
By the \(G\)-invariance of \(u\), the invariance of \(d\mu\), and the symmetry of the family \(\{\eta_{ij}\}\), we have
\[
\int_M |\eta_{ij}^{1/q}u|^{q_s^*}\,d\mu
=
\int_M |\eta_{i1}^{1/q}u|^{q_s^*}\,d\mu
\qquad \text{for all } j.
\]
Hence
\[
\left(
\int_M
\Bigl(\sum_{j=1}^{m_i}\eta_{ij}|u|^q\Bigr)^{q_s^*/q}\,d\mu
\right)^{q/q_s^*}
=
m_i^{q/q_s^*}\,
\|\eta_{i1}^{1/q}u\|_{L^{q_s^*}(M)}^q.
\]
Substituting this into \eqref{eq5.3}, we obtain
\begin{equation}\label{eq5.4}
\|u\|_{L^{q_s^*}(M)}^q
\le
\sum_{i\in I_1}
m_i^{q/q_s^*}\,
\|\eta_{i1}^{1/q}u\|_{L^{q_s^*}(M)}^q
+
\sum_{i\in I_2}
\|\zeta_i^{1/q}u\|_{L^{q_s^*}(M)}^q.
\end{equation}

We next estimate the terms on the right-hand side.

Fix \(\tau\in(0,1)\).

For \(i\in I_1\), by the sharp Sobolev inequality on \(M\) proved in \cite[Theorem 4.16]{Tan2025SharpFS}, for every \(\rho>0\), there exists \(B_i(\rho)>0\) such that
\[
\|\eta_{i1}^{1/q}u\|_{L^{q_s^*}(M)}^q
\le
\bigl(K(n,s,q)+\rho\bigr)\,[\eta_{i1}^{1/q}u]_{W^{s,q}(M)}^q
+
B_i(\rho)\|\eta_{i1}^{1/q}u\|_{L^q(M)}^q.
\]
Applying Lemma~\ref{lem5.2} with \(\psi=\eta_{i1}\), we obtain
\begin{equation}\label{eq5.5}
\|\eta_{i1}^{1/q}u\|_{L^{q_s^*}(M)}^q
\le
\bigl(K(n,s,q)+\rho\bigr)(1+\tau)^{q-1}
\iint_{M\times M}
\eta_{i1}(x)|u(x)-u(y)|^q K_q^s(x,y)\,d\mu(x)\,d\mu(y)
+
C_i\|u\|_{L^q(M)}^q.
\end{equation}

For \(i\in I_2\), the function \(\zeta_i^{1/q}u\) belongs to \(C_G^\infty(M)\), and
\[
\supp (\zeta_i^{1/q}u)\subset U_i.
\]
By the choice of \(U_i\), Lemma~\ref{lem5.1} applies.
Hence, for every \(\rho_i>0\), there exists \(B_i'>0\) such that
\[
\|\zeta_i^{1/q}u\|_{L^{q_s^*}(M)}^q
\le
\rho_i [\zeta_i^{1/q}u]_{W^{s,q}(M)}^q
+
B_i' \|\zeta_i^{1/q}u\|_{L^q(M)}^q.
\]
Applying Lemma~\ref{lem5.2} with \(\psi=\zeta_i\), we get
\begin{equation}\label{eq5.6}
\|\zeta_i^{1/q}u\|_{L^{q_s^*}(M)}^q
\le
\rho_i(1+\tau)^{q-1}
\iint_{M\times M}
\zeta_i(x)|u(x)-u(y)|^q K_q^s(x,y)\,d\mu(x)\,d\mu(y)
+
C_i'\|u\|_{L^q(M)}^q.
\end{equation}

Substituting \eqref{eq5.5} and \eqref{eq5.6} into \eqref{eq5.4}, we obtain
\begin{align}
\|u\|_{L^{q_s^*}(M)}^q
&\le
\sum_{i\in I_1}
m_i^{q/q_s^*}
\bigl(K(n,s,q)+\rho\bigr)(1+\tau)^{q-1}
\iint_{M\times M}
\eta_{i1}(x)|u(x)-u(y)|^q K_q^s(x,y)\,d\mu(x)\,d\mu(y)
\notag\\
&\quad+
\sum_{i\in I_2}
\rho_i(1+\tau)^{q-1}
\iint_{M\times M}
\zeta_i(x)|u(x)-u(y)|^q K_q^s(x,y)\,d\mu(x)\,d\mu(y)
\notag\\
&\quad+
B\|u\|_{L^q(M)}^q
\label{eq5.7}
\end{align}
for some constant \(B>0\).

For \(i\in I_1\), define
\[
F(x,y)=|u(x)-u(y)|^q K_q^s(x,y).
\]
As before, by the \(G\)-invariance of \(u\), the invariance of \(K_q^s\), and the symmetry of the family \(\{\eta_{ij}\}\),
\[
\iint_{M\times M}\eta_{ij}(x)F(x,y)\,d\mu(x)\,d\mu(y)
=
\iint_{M\times M}\eta_{i1}(x)F(x,y)\,d\mu(x)\,d\mu(y)
\]
for all \(j\).
Therefore
\[
m_i
\iint_{M\times M}\eta_{i1}(x)F(x,y)\,d\mu(x)\,d\mu(y)
=
\sum_{j=1}^{m_i}
\iint_{M\times M}\eta_{ij}(x)F(x,y)\,d\mu(x)\,d\mu(y).
\]
Using this in \eqref{eq5.7}, we get
\begin{align}
\|u\|_{L^{q_s^*}(M)}^q
&\le
\sum_{i\in I_1}
m_i^{q/q_s^*-1}
\bigl(K(n,s,q)+\rho\bigr)(1+\tau)^{q-1}
\sum_{j=1}^{m_i}
\iint_{M\times M}
\eta_{ij}(x)F(x,y)\,d\mu(x)\,d\mu(y)
\notag\\
&\quad+
\sum_{i\in I_2}
\rho_i(1+\tau)^{q-1}
\iint_{M\times M}
\zeta_i(x)|u(x)-u(y)|^q K_q^s(x,y)\,d\mu(x)\,d\mu(y)
\notag\\
&\quad+
B\|u\|_{L^q(M)}^q.
\label{eq5.8}
\end{align}

Now, for \(i\in I_1\),
\[
m_i=\Card O_G^{x_i}\ge k,
\]
and
\[
m_i^{q/q_s^*-1}
=
m_i^{-sq/n}
\le
k^{-sq/n}.
\]
If \(k=+\infty\), then \(I_1=\varnothing\), so the whole first sum is absent, which is consistent with the convention
\[
\frac{K(n,s,q)}{k^{sq/n}}=0.
\]

Since
\[
\sum_{i\in I_1}\sum_{j=1}^{m_i}\eta_{ij}
+
\sum_{i\in I_2}\zeta_i
=1,
\]
it follows from \eqref{eq5.8} that
\begin{align}
\|u\|_{L^{q_s^*}(M)}^q
&\le
k^{-sq/n}\bigl(K(n,s,q)+\rho\bigr)(1+\tau)^{q-1}
\iint_{M\times M}|u(x)-u(y)|^q K_q^s(x,y)\,d\mu(x)\,d\mu(y)
\notag\\
&\quad+
(1+\tau)^{q-1}
\left(\sum_{i\in I_2}\rho_i\right)
\iint_{M\times M}|u(x)-u(y)|^q K_q^s(x,y)\,d\mu(x)\,d\mu(y)
\notag\\
&\quad+
B\|u\|_{L^q(M)}^q.
\label{eq5.9}
\end{align}

Finally, choose \(\tau>0\) and \(\rho>0\) so small that
\[
k^{-sq/n}\bigl(K(n,s,q)+\rho\bigr)(1+\tau)^{q-1}
\le
\frac{K(n,s,q)}{k^{sq/n}}+\frac{\varepsilon}{2}
\]
whenever \(k<+\infty\).
If \(k=+\infty\), this term is absent.
Then choose the numbers \(\rho_i>0\), \(i\in I_2\), so that
\[
(1+\tau)^{q-1}\sum_{i\in I_2}\rho_i\le \frac{\varepsilon}{2}.
\]
It now follows from \eqref{eq5.9} that
\[
\|u\|_{L^{q_s^*}(M)}^q
\le
\left(\frac{K(n,s,q)}{k^{sq/n}}+\varepsilon\right)
[u]_{W^{s,q}(M)}^q
+
B\|u\|_{L^q(M)}^q
\qquad \text{for all } u\in C_G^\infty(M).
\]
By the density of \(C_G^\infty(M)\) in \(W_G^{s,q}(M)\), the inequality extends to every \(u\in W_G^{s,q}(M)\).
This proves Theorem~\ref{thm1.4}.
\end{proof}

\section*{Acknowledgments}
%We would like to thank the anonymous referee for his/her careful readings of our manuscript and the useful comments. 

\medskip
{\bf Funding:} This work is supported by National Natural Science Foundation of China (12301145, 12261107, 12561020) and Yunnan Fundamental Research Projects (202301AU070144, 202401AU070123).

\medskip
{\bf Author Contributions:} All the authors wrote the main manuscript text together and these authors contributed equally to this work.

\medskip
{\bf Data availability:}  Data sharing is not applicable to this article as no new data were created or analyzed in this study.

\medskip
{\bf Conflict of Interests:} The authors declares that there is no conflict of interest.

\bibliographystyle{plain} % 设置参考文献格式
\bibliography{ref} % 指定参考文献文件
\end{document}